\bfseries\color{blue}, 
\tiny\color{gray},   
\definecolor{darkspringgreen}{rgb}{0.09, 0.45, 0.27}
\definecolor{emerald}{rgb}{0.31, 0.78, 0.47}
\definecolor{indiagreen}{rgb}{0.07, 0.53, 0.03}
\definecolor{bulgarianrose}{rgb}{0.28, 0.02, 0.03}
\definecolor{brandeisblue}{rgb}{0.0, 0.44, 1.0}
\definecolor{bananayellow}{rgb}{1.0, 0.88, 0.21}
\def\ker{\operatorname{ker}}
\def\dim{\operatorname{dim}}
\def\exp{\operatorname{exp}}
\def\spec{\operatorname{spec}}
\def\supp{\operatorname{supp}}
\def\cone{\operatorname{cone}}
\def\grad{\operatorname{grad}}
\def\tube{\operatorname{tube}}
\def\Cayley{\operatorname{Cayley}}
\def\conv{\operatorname{conv}}
\def\p{\partial}
\def\th{\tilde{h}}
\def\Z{\mathbb{Z}}
\def\R{\mathbb{R}}
\def\C{\mathbb{C}}
\def\A{\mathbb{A}}
\def\F{\mathcal{F}}
\def\P{\mathbb{P}}
\def\O{\mathcal{O}}
\def\T{\mathbb{T}}
\def\X{\mathscr{X}}
\def\Y{\mathscr{Y}}
\newtheorem{theorem}{Theorem}[section]
\newtheorem{corollary}[theorem]{Corollary}
\newtheorem{lemma}[theorem]{Lemma}
\newtheorem{proposition}[theorem]{Proposition}
\newtheorem{definition-proposition}{Definition-Proposition}[section]
\newtheorem{definition-lemma}[theorem]{Definition-Lemma}
\address{Mykola Pochekai \\ Center for Geometry and Physics \\ Institute for Basic Science}
\email{mykola.pochekai@gmail.com}
\subjclass[2020]{Primary 14J32, 14J33; Secondary 14Q15, 32G20}
\keywords{Mirror symmetry, toric geometry}
\title{Geometry of the mirror models dual to the complete intersection of two cubics}
\author{Mykola Pochekai}
\begin{document}
\begin{abstract}
    We construct a natural crepant resolution of the Batyrev--Borisov mirror dual family to the complete intersection of two cubic hypersurfaces in $\mathbb P^5$. It is, similarly to the mirrors of quintic threefolds, a family over $\mathbb{P}^1$ with singular fibers over the set $\{0, \infty\} \cup \mu_6$. We compute an explicit height function producing the MPCP desingularization of the $B$-model toric ambient space. We compute the limiting mixed Hodge structures of the singular fibers. We show that the singular fiber over $\infty$ has maximal unipotent monodromy, whereas the singular fiber over $0$ is of a new type compared to the quintic case.
\end{abstract}
\maketitle
\thispagestyle{empty}

\tableofcontents

\section{Introduction}

One approach to mirror symmetry, originating in BCOV theory \cite{BCOV}, studies genus-one mirror symmetry and has been developed in a series of papers \cite{EFiMM,EFiMM2,EFiMM3}. In particular, genus-one mirror symmetry has been established for Calabi–Yau hypersurfaces in projective space. The next nontrivial case is complete intersections of two cubic hypersurfaces in $\P^5$. The present paper provides the geometric input needed to extend this program to the complete intersection of two cubic hypersurfaces in $\P^5$.

More precisely, we first construct a natural maximal projective star triangulation of the mirror polytope, which yields an explicit MPCP desingularization of the Batyrev--Borisov mirror family. This explicit model allows us to determine the geometry of all singular fibers of the compactified family and to compute the associated limiting mixed Hodge structures. Together, these results provide the detailed geometric input required by the genus-one mirror symmetry program.

The Batyrev--Borisov mirror construction produces a singular mirror family. Constructing an explicit MPCP desingularization therefore reduces to finding a suitable maximal regular star triangulation of the mirror polytope. We obtain a natural triangulation by relating the mirror polytope to the type $A_6$ alcoved triangulation of the corresponding Cayley polytope. See Theorem \ref{th:triangulation}.

\begin{theorem}
The type $A_6$ alcoved triangulation of the corresponding Cayley polytope
descends to a regular maximal
triangulation of the boundary of the mirror polytope. Coning from the origin yields a maximal regular star triangulation of the mirror polytope.
\end{theorem}

The resulting triangulation is a natural extension of the so-called edgewise subdivisions, constructed in \cite[Chapter III, Example 2.3]{KunMum}, which play a role in the theory of semistable reductions and were later studied independently in \cite[Section 2.3]{Esd}. The constructed triangulation is also the projection of a subcomplex of an alcoved triangulation \cite[Section 3]{LamPostnikovI}. 

Using this triangulation, we obtain an explicit MPCP desingularization of $\Y_\psi$ and determine the geometry of its singular fibers. The main geometric properties are summarized in the following theorem. We summarize the computation in the following theorem (Proposition \ref{th:infinity-is-mum}, Corollary \ref{cor:unipotency-of-0}, Theorem \ref{th:geometry-summary}, Theorem \ref{RhombusThoerem}). 

\begin{theorem}
Let $f\colon\Y\to\mathbb P^1$ be the compactified mirror family.
\begin{enumerate}
\item The fiber over each $\psi\in\mu_6$ has a unique ordinary double point;
\item The fiber over $0$ is the union of three irreducible components, the point $\psi = 0$ is a $K$-point;
\item The fiber over $\infty$ is the unique singular fiber of the total space, and its monodromy is maximally unipotent.
\end{enumerate}
\end{theorem}

The appearance of the K-point is the principal new feature distinguishing the present family from the hypersurface case studied in \cite{EFiMM3}, and plays an essential role in the analysis of the associated limiting mixed Hodge structures.


\section*{Acknowledgments}
The author would like to thank Dennis Eriksson for introducing him to the problem, for many valuable discussions, and for reviewing the text. He is also grateful to Marcus Berg, Michele Rossi, and Omid Amini for their helpful discussions and suggestions for improving the paper. This constitutes a part of the author's PhD thesis.

\section{Description of the dual nef-partition for intersection of two cubics}
\label{sec:dual-nef}
All schemes are considered to be of finite type over $\C$. By a variety, we mean a reduced, not necessarily irreducible, scheme of finite type over $\C$.

Consider a lattice $N$, which is a free abelian group of finite rank. Its dual lattice is denoted by $M := N^{\vee}$. A lattice polytope is defined as the convex hull of a finite set of points from $N$, and is a subset of $N_{\R} := N \otimes_{\Z} \R$. The notation $\partial P$ denotes the boundary of a polytope $P$.

If a full-dimensional polytope $P \subset N_{\R}$ contains $0_N$ in its interior, the dual polytope $P^{\vee} \subset M_{\R}$ can be defined in the standard way \cite[p.65]{CLS}. By $(n,m)$ where $n \in N_{\R}$ and $m \in M_{\R}$ we mean the standard perfect pairing between the $\R$-vector spaces $N_{\R}$ and its dual $M_{\R}$. Even if $P$ is a lattice polytope, the dual polytope $P^{\vee} \subset M_{\R}$ is not necessarily a lattice polytope. A full-dimensional lattice polytope containing $0$ in its interior is called reflexive if the dual polytope is also a lattice polytope. Reflexive polytopes will play a key role in later constructions since they correspond to Gorenstein Fano toric varieties \cite[Theorem 8.3.4]{CLS}. All polytopes considered in this work will be lattice polytopes; hence, we will simply use the term "polytope" to refer to a lattice polytope. Throughout this paper, we will refer to the Cox quotient construction, which is summarized in \cite[Theorem 5.1.11]{CLS}. 

Let's introduce the following lattices:
$$N=\{(x_1,...,x_6) \in \Z^6 : x_1+...+x_6=0\}, \qquad M = N^{\vee} = \frac{\Z^6}{\Z\{e_1+...+e_6\}}.$$
We write $e_i$ for the basis vector in $\Z^6$ and also use $e_i$ to denote the quotient class in $M$. Sometimes, when we want to emphasize that $M$ is the character lattice of the torus $\T_N$, we use a multiplicative basis $t_1,...,t_6$ with the condition $t_1 ... t_6 = 1$. In this case, we can write:
\begin{equation}\label{eq:m-ti-coordinates}
    M = \{t_1^{k_1}...t_6^{k_6} : (k_1,...,k_6) \in \Z^6\}, \quad t_1 ... t_6 = 1.
\end{equation}

We first remind the reader of the projective toric model of $\P^5$ used below. The polytope
$$\Delta = \conv (6e_1 - \sum_{i=1}^6 e_i, 6e_2 - \sum_{i=1}^6 e_i, ..., 6 e_6 - \sum_{i=1}^6 e_i) \subset N_{\R}.$$
satisfies $\P_{\Delta} \cong \P^5$. Here $\P_{\Delta}$ denotes the toric variety associated with the polytope $\Delta$; see \cite[Definition 2.3.14]{CLS}.

The normal fan $\Sigma_{\Delta}$ of the polytope $\Delta$ can be computed:
$$\Sigma_{\Delta}(5) = \{\sigma_1,...,\sigma_6\}, \qquad \sigma_i = \cone \{e_1,e_2,...,\widehat{e_i},...,e_6\} \subset M.$$
If $\Sigma$ is a fan, we will use $\Sigma(k)$ for the family of its $k$-dimensional cones and we will use $\Sigma(1)^{gen}$ for the set of ray generators. We use the letter $N$ for the character lattice instead of $M$ as our attention will be focused on the mirror dual side, where the roles of $N$ and $M$ are reversed.

We now recall the definition of a nef-partition. Let $\Delta \subset N_{\R}$ be a reflexive polytope, a decomposition $\Delta=\Delta_1+...+\Delta_k$ into a Minkowski sum of $k$ positive-dimensional (but not necessarily full-dimensional) polytopes, where $0_N \in \Delta_i$ for $i=1,...,k$, is called a nef-partition. The decomposition is denoted as $\Delta; \Delta_1,...,\Delta_k$. The identity of Minkowski sums translates to the identity of torus-invariant Cartier divisors: $D_{\Delta_1} + ... + D_{\Delta_k} = - K_{\P_{\Delta}}$. 

We define two divisors on $\P_{\Delta}$:
\begin{equation} \label{delta1-polytope-formula}
    D_{\Delta_1} = D_{e_1} + D_{e_2} + D_{e_3},
\end{equation}
\begin{equation} \label{delta2-polytope-formula}
    D_{\Delta_2} = D_{e_4} + D_{e_5} + D_{e_6}.
\end{equation}
These divisors correspond to 
$$V(x_1) + V(x_2)  + V(x_3),$$
$$V(x_4) + V(x_5)  + V(x_6).$$
Here, $V(x_i)$ refers to the coordinate hyperplane in $\P^5$ defined by the homogeneous equation $x_i=0$. Thus, $D_{\Delta_1}$ and $D_{\Delta_2}$ are clearly basepoint-free Cartier divisors and $D_{\Delta_1} + D_{\Delta_2} = \sum_{\rho \in \Sigma_{\Delta}(1)} D_{\rho} = - K_{\P_{\Delta}}$. We remind the reader that the fundamental polytope of a torus-invariant Cartier divisor $D= \sum_{\rho \in \Sigma_{\Delta}(1)} a_{\rho} D_{\rho}$ is defined as
$$P_{D} = \{n \in N_{\R} : (n,u_\rho) + a_\rho \geq 0 \text{ for every } \rho \in \Sigma_{\Delta}(1)\},$$
where $u_{\rho}$ is the ray generator for a ray $\rho \in \Sigma(1)$, see \cite[Formula 6.1.1]{CLS}. The fundamental polytopes of the divisors $D_{\Delta_1}$ and $D_{\Delta_2}$ are given by
    $$\Delta_1 := P_{D_{\Delta_1}} = \conv\{u_{1},u_{2},...,u_{6}\} \subset N_{\R},$$
    $$\Delta_2 := P_{D_{\Delta_2}} = \conv\{v_{1},v_{2},...,v_{6}\} \subset N_{\R},$$
where
    \begin{equation}\label{u-definition}
        u_{i} = 3e_{i} - e_1 - e_2 - e_3, 1 \leq i \leq 6,
    \end{equation}
    \begin{equation}\label{v-definition}
        v_{i} = 3e_{i} - e_4 - e_5 - e_6, 1 \leq i \leq 6.
    \end{equation}
The polytope $\Delta$ is a reflexive $5$-dimensional polytope, and $\Delta_1$, $\Delta_2$ are full-dimensional polytopes such that $\Delta = \Delta_1 + \Delta_2$, so, $\Delta; \Delta_1, \Delta_2$ is a nef-partition. 
Following \cite[Definition 4.9]{BB}, we can construct a dual nef-partition $\nabla; \nabla_1, \nabla_2$, corresponding to the nef-partition constructed $\Delta; \Delta_1, \Delta_2$. The tuple of polytopes $\nabla; \nabla_1, \nabla_2$, defined by the formulas
\begin{equation}\label{nabla1-polytope-formula}
    \nabla_1 = \conv \{0,e_1,e_2,e_3\} \subset M_{\R},
\end{equation}
\begin{equation}\label{nabla2-polytope-formula}
    \nabla_2 = \conv \{0,e_4,e_5,e_6\} \subset M_{\R},
\end{equation}
\begin{multline}\label{nabla-polytope-formula}
    \nabla = \conv \{e_1,e_2,e_3,e_4,e_5,e_6,e_1+e_4,\\
    e_1+e_5,e_1+e_6,e_2+e_4, e_2+e_5,e_2+e_6,\\
    e_3+e_4,e_3+e_5,e_3+e_6\} \subset M_{\R}
\end{multline}
is a dual nef-partition to the nef-partition $\Delta; \Delta_1, \Delta_2$. We can also compute the normal fan $\Sigma_{\nabla}$, which is the following fan supported in $N_{\R}$:
$$\Sigma_{\nabla}(1)^{gen} = \{u_{1},...,u_{6},v_{1},...,v_{6}\}.$$
There are three types of maximal cones, which correspond to the points in \eqref{nabla-polytope-formula}:
    $$U_i = \cone(u_{1},...,\widehat{u_{i}},...,u_{6}), 1 \leq i \leq 3,$$
    $$V_j = \cone(v_{1},...,\widehat{v_{j}},...,v_{6}), 4 \leq j \leq 6,$$
    $$C_{i,j} = \cone(u_{1},...,\widehat{u_{i}}, ..., \widehat{u_{j}}, ..., u_{6}, v_{1}, ..., \widehat{v_{i}},
    ..., \widehat{v_{j}}, ..., v_{6}), 1 \leq i \leq 3, 4 \leq j \leq 6.$$
Here, the hat indicates that the corresponding vector is omitted. The vectors $u_i$ and $v_j$ are defined by the formulas \eqref{u-definition} and \eqref{v-definition}, respectively. Thus, we have
\begin{equation} \label{sigma5-nabla-gen}
    \Sigma_{\nabla}(5) = \{U_1,U_2,U_3\} \cup \{V_4,V_5,V_6\} \cup \{C_{i,j} : 1 \leq i \leq 3, 4 \leq j \leq 6\}.
\end{equation}

The last piece of information that we need is the torus-invariant basepoint-free Cartier divisors $D_{\nabla_1}, D_{\nabla_2}$. These basepoint-free divisors can be recovered from their fundamental polytope using the formula:
$$D_{P} = \sum_{\rho \in \Sigma(1)} (-\min_{m \in P \cap M} (u_{\rho},m))D_{\rho},$$
see \cite[Theorem 6.1.7(g)]{CLS}, \cite[Theorem 4.2.12(b)]{CLS}, and \cite[Theorem 6.3.12]{CLS}. Applying the formula above to our specific case, we obtain:
$$D_{\nabla_1} = D_{u_1} + ... + D_{u_6},$$
$$D_{\nabla_2} = D_{v_1} + ... + D_{v_6}.$$
We can summarize the above discussion in the following theorem:
\begin{proposition}\label{th:dual-toric-data}
    The polytopes $\nabla; \nabla_1,\nabla_2$, defined by formulas \eqref{nabla1-polytope-formula}, \eqref{nabla2-polytope-formula}, and \eqref{nabla-polytope-formula}, form the dual nef-partition to the nef-partition $\Delta; \Delta_1, \Delta_2$, where $\P_{\Delta} \cong \P^5$ and $D_{\Delta_1} = V(x_1) + V(x_2) + V(x_3), D_{\Delta_2} = V(x_4) + V(x_5) + V(x_6)$. The normal fan $\Sigma_{\nabla}$ has ray generators:
    $$\Sigma_{\nabla}(1)^{gen} = \{u_1,...,u_6,v_1,...,v_6\},$$
    where $u_i,v_j \in N$ are vectors defined by the formulas \eqref{u-definition} and \eqref{v-definition}. The set of maximal cones $\Sigma_{\nabla}(5)$ is given by the formula \eqref{sigma5-nabla-gen}. The basepoint-free Cartier divisors associated to the polytopes $\nabla_1$ and $\nabla_2$ are 
    $$D_{\nabla_1} = D_{u_1} + ... + D_{u_6},$$ $$D_{\nabla_2} = D_{v_1} + ... + D_{v_6}.$$
\end{proposition}

We now construct an MPCP desingularization of $\mathbb P_{\nabla}$, in the sense of \cite[Definition 2.2.13]{B}. In the toric setting, the construction described in \cite[Theorem 2.2.24]{B} reduces to choosing a maximal regular star triangulation of the polytope
$$P=\operatorname{conv}(\Delta_1,\Delta_2)
=\operatorname{conv}\{u_1,\ldots,u_6,v_1,\ldots,v_6\}
\subset N_{\mathbb R}.$$
We use the term triangulation in the standard sense: a finite collection of affinely embedded simplices whose union is P, closed under taking faces, and such that the intersection of any two simplices is a common face of both. A star triangulation is a triangulation in which each maximal simplex contains $0_N$ as a vertex. We construct a canonical maximal, regular, star triangulation of the polytope $P = \conv(\Delta_1, \Delta_2)$. 

In the following theorem we define 
$$\A^1 := \{(x_7,x_8) \in \R^2 : x_7 + x_8 = 1\},$$ 
and we remind the reader that $\Cayley(\Delta_1,\Delta_2)$ is a polytope with vertices $(a,1,0)$ and $(b,0,1)$, where $a$ is a vertex of $\Delta_1$ and $b$ is a vertex of $\Delta_2$. See also \cite{Triangulations} for the general information about Cayley polytopes and Cayley embeddings.

Let $\mathcal K\subset \mathbb R^{n'}$ and
$\mathcal L\subset \mathbb R^{n''}$ be polytopal complexes, and let
$\tau(\mathcal K)$ and $\tau(\mathcal L)$ be triangulations
of $\mathcal K$ and $\mathcal L$, respectively. Let
$
\pi:\mathbb R^{n'}\longrightarrow \mathbb R^{n''}
$
be an affine map. We say that $\tau(\mathcal L)$ is
$\pi$-induced from $\tau(\mathcal K)$ if there exists a simplicial
subcomplex
$
\tau'\subseteq \tau(\mathcal K)
$
such that $\pi$ induces an isomorphism of geometric simplicial complexes
$$
\tau'\xrightarrow{\sim}\tau(\mathcal L).
$$
Equivalently, $\pi$ restricts to an affine isomorphism on every simplex
of $\tau'$, the induced map
$$
|\tau'|\longrightarrow |\tau(\mathcal L)|
$$
is a homeomorphism, and
$
\tau(\mathcal L)
=
\{\pi(\sigma):\sigma\in\tau'\}.
$

\begin{theorem} \label{th:triangulation}
    Let $P = \conv(\Delta_1,\Delta_2) \subset N_{\R}$ be the face polytope of the fan $\Sigma_{\nabla}$ of a toric ambient space for the Batyrev--Borisov mirror dual of complete intersection of two cubics in $\P^5$. The natural lattice-affine projection $C : N_{\R} \times \mathbb A^1 \to N_{\R}$ restricts to the map of polytopes $C: \Cayley(\Delta_1, \Delta_2) \to \conv(\Delta_1,\Delta_2)$.  
    
    There exists a maximal regular star triangulation $\tau(P)$ of $P$, such that induced triangulation of the boundary $\tau(\p P)$ is $C$-induced from the maximal regular triangulation $\tau(\Cayley(\Delta_1, \Delta_2))$ with height function
    $$\omega(x) = \sum_{\substack{1 \leq i\leq j \leq 8\\ 0 \leq k \leq 3}}  |\sum_{i \leq t \leq j} y_t - k|, \quad y_i = x_i + \delta_i^1 x_7 + \delta_i^2 x_7 + \delta_i^3 x_7 + \delta_i^4 x_8 + \delta_i^5 x_8 + \delta_i^6 x_8,$$ 
    coming from the subdivision of $N_{\R}\times \mathbb A^1$ into affine Weyl chambers of type $A_6$. The symbol $\delta_{a}^b$ denotes the Kronecker symbol, which is equal to $1$ when $a=b$ and equal to $0$ otherwise. 
\end{theorem}
\begin{proof}
    Define the affine lattice 
    $$N' = \{(x_1,\ldots, x_6) \in \Z^6 : x_1 + \ldots + x_6 = 3\}.$$ 
    And consider the standard 3-dilated $5$-simplices in $N'_{\R}$:
    $$\Delta_1'= \Delta_2' = \conv \{3e_1,3e_2,3e_3,3e_4,3e_5,3e_6\} \subset N'_{\R}.$$
    So their Cayley polytope lies in $N'_{\R} \times \A^1$:
    $$\Cayley(\Delta_1',\Delta_2') \subset N'_{\R} \times \A^1.$$
    Consider the cumulative coordinates $z_0, z_1, \ldots, z_8$ on the $\R^8$, defined by  $z_i = x_1 + \ldots + x_i$. In particular $z_0 = 0$. Restricting the coordinates to the affine subspace 
    $$N'_{\R} \times \A^1 \subset \R^8$$
    is the same as imposing conditions $z_6 - z_0 = 3, z_8 - z_6 = 1$. The regular triangulation $\tau(\Cayley(\Delta_1',\Delta_2'))$ of $\Cayley(\Delta_1',\Delta_2')$ associated with the height function 
    $$\nu =  \sum_{\substack{1 \leq i\leq j \leq 8\\ 0 \leq k \leq 3}}  |\sum_{i \leq t \leq j} x_t - k|$$
    is the triangulation obtained by drawing all hyperplanes 
    $$H_{ij}^{k} = \{(x_1,\ldots,x_8) \in \R^8 : z_j - z_{i-1} = k\} \text{ for }1 \leq i \leq j \leq 8, 0 \leq k \leq 3$$ 
    and dissecting the polytope $\Cayley(\Delta_1',\Delta_2')$ along them. $\Cayley(\Delta_1',\Delta_2')$ can be defined by system of inequalities 
    $$z_i - z_{i-1} \geq 0 \text{ for } 1 \leq i \leq 8$$
    inside the affine subspace $N'_{\R} \times \A^1$. In other words, $\Cayley(\Delta_1',\Delta_2')$ is an alcoved polytope in the terminology of the \cite{LamPostnikovI}. So it is naturally equipped with triangulation $\tau(\Cayley(\Delta_1',\Delta_2'))$ into alcoves (or Weyl chambers), see \cite[Section 2.3]{LamPostnikovI} and \cite[Chapter III, Section 2]{KunMum}. These alcoves are of type $A_6$, since there are only $6$ effective cumulative coordinates.
    
    The lattice-unimodular affine transformation sending $e_7 \mapsto e_7 - e_1 - e_2 - e_3$ and $e_8 \mapsto e_8 - e_4 - e_5 - e_6$ maps $\Delta'_1 \times \{(1,0)\}$ onto $\Delta_1 \times \{(1,0)\}$ and $\Delta'_2 \times \{(0,1)\}$ onto $\Delta_2 \times \{(0,1)\}$, transforms the coordinates $x_i$ into the coordinates $y_i$ and the function $\nu$ to the function $\omega$. Here, $\Delta_1$ and $\Delta_2$ are defined by the formulas \eqref{delta1-polytope-formula} and \eqref{delta2-polytope-formula}.

To show that this induces a triangulation of $P=\operatorname{conv}(\Delta_1,\Delta_2)$, we construct an explicit polytopal subcomplex
$$
A\subset \operatorname{skel}_4\operatorname{Cayley}(\Delta_1,\Delta_2)
$$
whose support maps PL-homeomorphically onto $\partial P$ under $C$. Let $A$ be the pure $4$-dimensional polytopal subcomplex whose maximal faces are the following:
\begin{itemize}
    \item for $1\leq i\leq 3$, the face $(U_i\cap\partial P)\times\{(1,0)\},$ which is mapped by $C$ to $U_i\cap\partial P$;

    \item for $4\leq j\leq 6$, the face $(V_j\cap\partial P)\times\{(0,1)\},$ which is mapped by $C$ to $V_j\cap\partial P$;

    \item for $1\leq i\leq 3$ and $4\leq j\leq 6$, the Cayley face
    $$\Cayley \left(\operatorname{conv}\{u_k:k\neq i,j\}, \operatorname{conv}\{v_k:k\neq i,j\}\right),
    $$
    which is mapped by $C$ to $C_{i,j}\cap\partial P$.
\end{itemize}
By construction $C$ maps affine-isomorphically facets of $A$ to the facets of $\p P$ and respects intersections, so it is a PL-homeomorphism. The induced triangulation $\tau(A)$ then can be transferred to the corresponding triangulation $\tau(\p P)$ via this PL-homeomorphism.

This gives a maximal triangulation $\tau(\p P)$, which is regular on each facet. After adding $\{0\}$ to every maximal simplex, we obtain the maximal regular star triangulation $\tau(P)$.
\end{proof}

Now we have a maximal star triangulation $\tau(P)$, which is regular on each facet. More precisely, $\tau(P)$ is induced by the height function $\psi:P\cap N\to\mathbb R$,  defined on the lattice points by
$$
\psi(p)=
\begin{cases}
\omega(p_1+1,p_2+1,p_3+1,p_4,p_5,p_6,1,0),
& \text{if }p\text{ is of }u\text{-type},\\[1ex]
\omega(p_1,p_2,p_3,p_4+1,p_5+1,p_6+1,0,1),
& \text{if }p\text{ is of }v\text{-type},\\[1ex]
-M,
& p=0.
\end{cases}
$$
Here $M \gg 0$ is a large enough constant. Define the fan $\Pi$ as the face fan of the triangulation $\tau(P)$ --- that is, the maximal cones of $\Pi$ are in one-to-one correspondence with maximal simplices of $\tau(P)$. As stated in \cite[Section 4.2]{B}, the canonical morphism $\P_{\Pi} \to \P_{\Sigma_{\nabla}}$ is a crepant resolution (we also check this in the next section directly). We can summarize this in the following corollary:
\begin{corollary} \label{th:pi-description}
    There is a smooth complete fan $\Pi$ with ray generators in the lattice $N$ which is a subdivision of the fan $\Sigma_{\nabla}$. The fan $\Pi$ has $110$ rays, with the following set of ray generators:
    $$\Pi(1)^{\text{gen}} = \{u_{ijk} :  1 \leq i \leq j \leq k \leq 6, (i,j,k) \neq (1,2,3)\}$$
    $$\cup \{v_{ijk} :  1 \leq i \leq j \leq k \leq 6, (i,j,k) \neq (4,5,6)\},$$
    $$u_{ijk} = -e_1 - e_2 - e_3 + e_i + e_j + e_k, \qquad v_{ijk} = -e_4 - e_5 - e_6 + e_i + e_j + e_k,$$
    and the canonical morphism of the corresponding toric varieties $\pi : \P_{\Pi} \to \P_{\Sigma_{\nabla}}$ is a crepant resolution. $\P_{\Pi}$ is a smooth projective toric variety.
\end{corollary}

\section{Geometry of the mirror dual family} \label{sec:dual-family}
In this section we will construct the mirror family and establish its geometric properties. By construction, the fan $\Pi$ is a subdivision of the fan $\Sigma_{\nabla}$. Hence, the identity morphism $id : N_{\R} \to N_{\R}$ induces a natural morphism of fans $\Pi \to \Sigma_{\nabla}$. This gives a morphism of algebraic varieties, denoted by $\pi: \P_{\Pi} \to \P_{\Sigma_{\nabla}}$, which is a crepant resolution. One can compute the Cartier divisors $\pi^* D_{\nabla_1}$ and $\pi^* D_{\nabla_2}$ in terms of the torus-invariant divisors of the variety $\P_{\Pi}$:
    $$D_1 := \pi^* D_{\nabla_1} = \sum_{\substack{1 \leq i_1 \leq i_2 \leq i_3 \leq 6 \\(i_1,i_2,i_3) \neq (1,2,3)}} D_{u_{i_1i_2i_3}},\quad D_2 := \pi^* D_{\nabla_2} = \sum_{\substack{1 \leq i_1 \leq i_2 \leq i_3 \leq 6 \\ (i_1,i_2,i_3) \neq (4,5,6)}} D_{v_{i_1i_2i_3}},$$
using the standard techniques of \cite[Theorem 4.2.12]{CLS}.

As discussed in Section \ref{sec:dual-nef}, the variety $\P_{\Pi}$ is smooth and projective. Now, we introduce the family $\Y_{\xi_1,a_1,a_2,a_3,\xi_2,a_4,a_5,a_6} = V(s_1,s_2)$, where:

    $$s_1  = 3 \xi_1 \prod_{\substack{(i_1,i_2,i_3) \neq \\ (1,2,3)}} u_{i_1 i_2 i_3} - \sum_{t=1}^{3} a_t (\prod_{\substack{(i_1,i_2,i_3) \neq \\ (4,5,6)}} v_{i_1 i_2 i_3}^{\delta_{i_1}^t  + \delta_{i_2}^t + \delta_{i_3}^t}  \prod_{\substack{(i_1,i_2,i_3) \neq \\ (1,2,3)}} u_{i_1 i_2 i_3}^{\delta_{i_1}^t  + \delta_{i_2}^t + \delta_{i_3}^t}),$$

    $$s_2 = 3 \xi_2 \prod_{\substack{(i_1,i_2,i_3) \neq \\ (4,5,6)}} v_{i_1 i_2 i_3} - \sum_{t=4}^{6} a_t (\prod_{\substack{(i_1,i_2,i_3) \neq \\ (4,5,6)}} v_{i_1 i_2 i_3}^{\delta_{i_1}^t  + \delta_{i_2}^t + \delta_{i_3}^t}  \prod_{\substack{(i_1,i_2,i_3) \neq \\ (1,2,3)}} u_{i_1 i_2 i_3}^{\delta_{i_1}^t + \delta_{i_2}^t + \delta_{i_3}^t}). $$

The notation $V(s_1,s_2)$ denotes the scheme-theoretic zero locus of the sections $s_1,s_2$. Each product is taken over all indices $1 \leq i_1 \leq i_2 \leq i_3 \leq 6$, except for those triples that are explicitly excluded. We consider a family of varieties defined as the scheme-theoretic intersection of the zero sets of two sections, $s_1$ and $s_2$, chosen from $H^0(\mathcal O (D_1) )$ and $H^0(\mathcal O(D_2))$, respectively. The equation is written in affine coordinates on the Cox space $\C^{\Pi(1)} - Z(\Pi)$, where $Z(\Pi)$ is the toric irrelevant locus.

The symbols $u_{i_1i_2i_3}, v_{i_1i_2i_3}$ are used both for ray generators (elements of $\Pi(1)^{gen}$) and for corresponding variables in the Cox ring $\C[r : r \in \Pi(1)^{gen}]$, while all other symbols are merely parameters of the family $\Y$. 

As stated in \cite{BB}, the family $\Y_{\xi_1,...,a_6}$ is a mirror dual family to the complete intersection of two cubic hypersurfaces in $\P^5$. Specifically, the Hodge diamond of the smooth complete intersection of two cubics is mirror-symmetric to the Hodge diamond of a sufficiently generic fiber of the family $\Y_{\xi_1,...,a_6}$. 

Hence, for sufficiently generic parameters, we have $h^{2,1} (\Y_{\xi_1,...,a_6}) = 1$ (see \cite[Theorem 4.15]{StringyBB}). The number $h^{2,1}$ can be interpreted as the dimension of the local deformation space of complex structures. Although the family $\Y_{\xi_1,...,a_6}$ is $6$-dimensional, we expect it to be effectively a $1$-dimensional family. The overabundance hints that the family might be parameterized inefficiently, and that many fibers could actually be isomorphic. This is indeed the case: up to automorphisms, the family depends only on the value $\frac{a_1 ... a_6}{\xi_1^3 \xi_2^3}$, at least whenever $a_i \neq 0, \xi_i \neq 0$. 

So we define the family $\Y_{\psi} = V(h_1,h_2)$, where $h_1$ and $h_2$ are defined by:
\begin{equation} \label{eq:h1}
    h_1  = 3 \psi \prod_{\substack{(i_1,i_2,i_3) \neq \\ (1,2,3)}} u_{i_1 i_2 i_3} - \sum_{t=1}^{3} (\prod_{\substack{(i_1,i_2,i_3) \neq \\ (4,5,6)}} v_{i_1 i_2 i_3}^{\delta_{i_1}^t  + \delta_{i_2}^t + \delta_{i_3}^t}  \prod_{\substack{(i_1,i_2,i_3) \neq \\ (1,2,3)}} u_{i_1 i_2 i_3}^{\delta_{i_1}^t  + \delta_{i_2}^t + \delta_{i_3}^t}),
\end{equation}
\begin{equation} \label{eq:h2}
    h_2 = 3 \psi \prod_{\substack{(i_1,i_2,i_3) \neq \\ (4,5,6)}} v_{i_1 i_2 i_3} - \sum_{t=4}^{6} (\prod_{\substack{(i_1,i_2,i_3) \neq \\ (4,5,6)}} v_{i_1 i_2 i_3}^{\delta_{i_1}^t  + \delta_{i_2}^t + \delta_{i_3}^t}  \prod_{\substack{(i_1, i_2, i_3) \neq \\ (1,2,3)}} u_{i_1 i_2 i_3}^{\delta_{i_1}^t + \delta_{i_2}^t + \delta_{i_3}^t}), 
\end{equation}
where the parameter $\psi$ belongs to $\P^1$, the total space is $\Y \subset \P_{\Pi} \times \P^1$, and the structural morphism is $f : \Y \to \P^1$. Since $\Y_{\psi}$ is a subfamily of $\Y_{\xi_1,...,a_6}$, we continue to use the symbol $\Y$ to denote both the total space of the new family and the family itself. 

It is also useful to write down the equations in toric coordinates:
\begin{equation} \label{eq:h-inside-torus}
    h_1|_{\T_N} = 3 \psi - t_1 - t_2 - t_3, \quad h_2|_{\T_N} = 3 \psi - t_4 - t_5 - t_6.
\end{equation}
Thus, we have $\Y_{\psi} \cap \T_N = V(h_1|_{\T_N},h_2|_{\T_N}) \subset \T_N$. 

In the further considerations we will also need the family $\X_{\psi}$, with fibers $\X_{\psi} \subset \P_{\Sigma_{\nabla}}$, which lives in $\P_{\Sigma_{\nabla}}$ and corresponds to the family $\Y_{\psi}$ before the crepant desingularization procedure. So, let $\X_{\psi} = V(r_1, r_2)$, where 
$$r_1 = 3 \psi u_1 ... u_6 - u_1^3 v_1^3 - u_2^3 v_2^3 - u_3^3 v_3^3, \qquad r_2 = 3 \psi v_1 ... v_6 - u_4^3 v_4^3 - u_5^3 v_5^3 - u_6^3 v_6^3,$$ 
$$r_1 \in \Gamma(\P_{\Sigma_{\nabla}}, \O(D_{\nabla_1})), \qquad r_2 \in \Gamma(\P_{\Sigma_{\nabla}}, \O(D_{\nabla_2})).$$
These equations are expressed in terms of the Cox ring of the variety $\P_{\Sigma_{\nabla}}$. For the definitions of the fan $\Sigma_{\nabla}$ and the divisors $D_{\nabla_1}, D_{\nabla_2}$ see Theorem \ref{th:dual-toric-data}. The canonical map 
$$\pi : \P_{\Pi} \to \P_{\Sigma_{\nabla}}$$
restricts to the canonical map $\pi : \Y_{\psi} \to \X_{\psi}$ for every $\psi \in \P^1$. The following proposition shows that $\X_{0}$ is normal.

\begin{proposition}\label{th:X0-normal}
    The total space $\X \subset \P_{\Sigma_{\nabla}} \times \P^1$ is a normal variety. The fiber $\X_0 \subset \P_{\Sigma_{\nabla}}$ is a normal variety. The variety $\X_0$ is integral.
\end{proposition}
\begin{proof}
    To prove the first statement, we use Serre's normality criterion, which states that $\X$ is normal if and only if it satisfies conditions $S_2$ and $R_1$. For the $S_2$ condition, note that $\X$ is even Cohen--Macaulay since $\P_{\Sigma_{\nabla}} \times \P^1$ is, and $\X$ is a complete intersection of dimension $4$ inside $\P_{\Sigma_{\nabla}} \times \P^1$. 
    
    Due to the symmetry of the problem, it is enough to compute $(\spec \C[U_{\sigma}] \times \P^1) \cap \X$ only in the cases when $\sigma$ is $U_1$ or $C_{3,6}$. To verify the $R_1$ condition, we need to prove that the singular locus of the following four affine algebraic $\C$-schemes has dimension at most $2$:
    $$\spec \frac{\C[y_1,...,y_6, \psi]}{(y_2 y_3 y_4 y_5 y_6 - y_1^3, 3\psi y_1 - 1 - y_2 - y_3, 3\psi  - y_4 - y_5 - y_6)},$$
    $$ \spec \frac{\C[y_1,...,y_6, \psi]}{(y_1 y_2 y_4 y_5 - y_3^3 y_6^3, 3 \psi y_3 - y_1 - y_2 - 1, 3 \psi y_6 - y_4 - y_5 - 1)},$$
    $$ \spec \frac{\C[y_1,...,y_6, \psi^{-1}]}{(y_2 y_3 y_4 y_5 y_6 - y_1^3, 3 y_1 - \psi^{-1}( 1 + y_2 + y_3), 3  - \psi^{-1}( y_4 + y_5 + y_6))},$$
    $$ \spec \frac{\C[y_1,...,y_6, \psi^{-1}]}{(y_1 y_2 y_4 y_5 - y_3^3 y_6^3, 3 y_3 - \psi^{-1}( y_1 + y_2 + 1), 3 y_6 - \psi^{-1} (y_4 + y_5 + 1))}.$$
    For each of these affine schemes, the singular locus is defined by the ideal generated by the three defining equations together with the $3\times 3$ minors of the Jacobian matrix. Computing this ideal, we find that the corresponding singular loci have dimension $2$. Therefore $\X$ is regular in codimension one. Hence $\X$ satisfies $R_1$, and so $\X$ is normal.    
    
    To show that $\X_0$ is normal, we again need to verify the $S_2$ and $R_1$ conditions. The $S_2$ condition is satisfied because $\P_{\Sigma_{\nabla}}$ is Cohen--Macaulay and $\X_0$ is a complete intersection of dimension $3$. The $R_1$ condition can be checked by computing the dimension of the singular locus of:
    $$ \spec \frac{\C[y_1,...,y_6]}{(y_2 y_3 y_4 y_5 y_6 - y_1^3, - 1 - y_2 - y_3,  - y_4 - y_5 - y_6)},$$
    $$\spec \frac{\C[y_1,...,y_6]}{(y_1 y_2 y_4 y_5 - y_3^3 y_6^3, - y_1 - y_2 - 1, - y_4 - y_5 - 1)}.$$
    The Jacobian computation shows that the singular loci of these two affine schemes have dimension at most $1$. Since $\dim \X_0=3$, this proves that $\X_0$ is regular in codimension one. Therefore $\X_0$ satisfies $R_1$, and hence is normal by Serre's criterion.

    For every ray $\rho \in \Sigma_\nabla(1)$, the variety
    $$\X_0\cap D_\rho$$
    has dimension at most $2$.
    Hence no irreducible component of $\X_0$ is contained in the toric boundary
    $\P_{\Sigma_\nabla}\setminus \T_N$.
    Therefore every irreducible component of $\X_0$ meets $\T_N$.
    Since $\X_0 \cap \T_N$ is irreducible, $\X_0$ has only one irreducible component. As $\X_0$ is normal, it is reduced. Thus $\X_0$ is reduced and irreducible, hence integral.
\end{proof}

The remaining section focuses on computing the smooth fiber locus of the family $\Y_{\psi}$. For clarity, we introduce the notation $\zeta_6 = \exp(\frac{2 \pi i}{6})$ to denote a primitive sixth root of unity, and $\mu_6 = \{1,\zeta_6, ..., \zeta_6^5\}$ for the set of all sixth roots of unity. Additionally, we classify the singularities of the singular fibers. We will summarize all the propositions about the family $\Y_{\psi}$ which we aim to prove in a single theorem, both for the sake of readability and to facilitate easier referencing:  

\begin{theorem} \label{th:geometry-summary}
    Let $\Y_{\psi} = V(h_1, h_2)$ be a family with $\psi \in \P^1$ and total space $\Y \subset \P_{\Pi} \times \P^1$. The equations $h_1$ and $h_2$ are defined by formulas \eqref{eq:h1}, \eqref{eq:h2}.
    \begin{enumerate}
        \item The family $\Y\to \P^1$ is flat over the one-dimensional base $\P^1$.
        \item The total space of the family $\Y \subset \P_{\Pi} \times \P^1$ is smooth, except for the points lying in $\Y_\infty \subset \Y$.
        \item When $\psi \not \in \mu_6 \cup \{0,\infty\}$, the fiber $\Y_{\psi}$ is smooth.
        \item When $\psi \in \mu_6$, the fiber $\Y_{\psi}$ has a unique ordinary double point, which lies in $\Y_{\psi} \cap \T_N$ and has coordinates $(\psi,...,\psi) \in \T_N$.
        \item \label{thm:geometrysummary-item5} When $\psi = 0$, the fiber $\Y_{0} = W_0 \cup W_1 \cup W_2$ has three integral components, which are defined by the equations:
        $$W_0 = V\left(\frac{h_1|_{\psi = 0}}{v_{123}}, \frac{h_2|_{\psi = 0}}{u_{456}}\right), W_1 = V\left(v_{123}, \frac{h_2|_{\psi = 0}}{u_{456}}\right), W_2 = V\left(\frac{h_1|_{\psi = 0}}{v_{123}},  u_{456}\right).$$
        $W_1, W_2$ are smooth rational exceptional components, whereas $W_0$ is the strict transform of $\X_0$.
    \end{enumerate}
\end{theorem}
    Point (1) follows from miracle flatness, since $\Y$ is Cohen-Macaulay, $\P^1$ is regular and every fiber has dimension $3$. Points (2)--(4) follow directly from the Jacobian criterion. We therefore focus on point (5). If $\psi=0$, then $h_1$ is divisible by $v_{123}$ and $h_2$ is divisible by $u_{456}$, as follows from the equations \eqref{eq:h1}, \eqref{eq:h2}. We introduce the notation
    \begin{equation}\label{def:th}
    \th_1 := \frac{h_1|_{\psi=0}}{v_{123}},
    \qquad
    \th_2 := \frac{h_2|_{\psi=0}}{u_{456}}.
    \end{equation}
    With this notation, we have
    $$
    W_0 = V(\th_1,\th_2), \qquad
    W_1 = V(v_{123},\th_2), \qquad
    W_2 = V(\th_1,u_{456}).
    $$
    We will also use $\phi_{i_0} : \Pi(1)^{gen} \to \Z$ for the function defined by $$\phi_{i_0}(u_{ijk}) = \delta_{i_0}^i + \delta_{i_0}^j + \delta_{i_0}^k, \quad \phi_{i_0}(v_{ijk}) = \delta_{i_0}^i + \delta_{i_0}^j + \delta_{i_0}^k.$$
         
\begin{proposition} \label{th:w1-w2-integrality}
    The components $W_1$ and $W_2$ are smooth, integral and rational varieties.
\end{proposition}
\begin{proof}
    We now prove that $W_1 = V(v_{123}, \th_2)$ and 
    $W_2 = V(\th_1, u_{456})$ are smooth. The only way $W_2$ could be singular is if there exists a point $p \in V(\th_1, u_{456})$ such that $(\grad \th_1)|_p = 0$. This follows from the Jacobian criterion and the observation that $(\grad u_{456})|_p$ (here, $u_{456}$ considered as monic polynomial) has a $1$ in the position corresponding to the variable $u_{456}$, while $(\grad \th_1)|_p$ has a $0$ in that same position. Therefore, the gradients are not collinear-by-nonzero-scalars, so, we should have $(\grad \th_1)|_p = 0$. The existence of such a point $p$ would imply the existence of a cone $\sigma \in \Pi(5)$ with the following properties:
    \begin{itemize}
        \item $u_{456} \in \sigma$, which ensures that $W_2 \cap U_{\sigma} \neq \emptyset$.
        \item There is a triple of not necessarily distinct $u$-ray generators $r_1, r_2, r_3 \in \sigma^{gen} - \{u_{456}\}$ such that $\phi_1(r_1) \geq 1, \phi_2(r_2) \geq 1$, and $\phi_3(r_3) \geq 1$. This follows from the identities
        $$\frac{\partial \th_1}{\partial v_{111}} \big|_p = \frac{\partial \th_1}{\partial v_{222}} \big|_p = \frac{\partial \th_1}{\partial v_{333}} \big|_p = 0.$$
    \end{itemize} 
    One can check that no such cone exists: the first condition forces $\sigma$ to be a subcone of $U_1, U_2,$ or $U_3$ and the second condition eliminates even those possibilities. The smoothness of $W_1$ can be verified using the same argument.

    We prove that $W_1$ and $W_2$ are integral and rational. By the argument above, both $W_1$ and $W_2$ are smooth. It remains to prove that they are connected. We treat $W_1$; the argument for $W_2$ is symmetric. Consider the cone
    $$
    \sigma=\operatorname{cone}\{v_{123},v_{124},v_{224},v_{234},v_{235}\}\in \Pi.
    $$
    On the corresponding affine toric chart $U_\sigma$, all Cox coordinates corresponding to rays not contained in $\sigma$ are set equal to $1$. Therefore
    $$
    U_\sigma\cap W_1=U_\sigma\cap V(v_{123},\th_2)=V(v_{123},v_{124}v_{224}v_{234}+v_{235}+1)\subset U_\sigma.
    $$
    Since
    $$
    U_\sigma \cong \spec\mathbb C[v_{123},v_{124},v_{224},v_{234},v_{235}],
    $$
    the equation $v_{123}=0$ eliminates $v_{123}$, and the second equation solves uniquely for $v_{235}$. Hence
    $$
    U_\sigma\cap W_1\cong \spec\mathbb C[v_{124},v_{224},v_{234}]\cong \mathbb C^3.
    $$
    In particular, $U_\sigma\cap W_1$ is connected and irreducible. We claim that the complement $W_1\setminus (U_\sigma\cap W_1)$ has dimension at most $2$. Indeed, the complement of $U_\sigma$ in $\P_\Pi$ is a union of toric boundary divisors $D_\rho$, with $\rho\notin\sigma(1)$. Hence it is enough to check that
    $$
    \dim(W_1\cap D_\rho)\leq 2
    $$
    for every such ray $\rho$. This is verified on affine toric charts by adding the equation of $D_\rho$ to the defining equations $v_{123}=0$ and $\th_2=0$ of $W_1$; the resulting schemes all have dimension at most $2$. Since $W_1$ is smooth of dimension $3$, every connected component of $W_1$ is open and has dimension $3$. Therefore no connected component can be contained in $W_1\setminus (U_\sigma\cap W_1)$. Hence every connected component of $W_1$ meets $U_\sigma\cap W_1$. Since $U_\sigma\cap W_1$ is connected, $W_1$ is connected. The same argument proves that $W_2$ is connected. Thus $W_1$ and $W_2$ are smooth and connected, hence reduced and irreducible. Therefore $W_1$ and $W_2$ are integral. The same affine charts also show that both components are rational.
\end{proof}

\begin{proposition} \label{th:y0-decomposition}
The central fiber $\Y_0$ decomposes scheme-theoretically as
$$
\Y_0=W_0\cup W_1\cup W_2.
$$
The component $W_0$ is integral and the strict transform of $\X_0$.
\end{proposition}
\begin{proof}
Let
$$\pi:\P_\Pi\to \P_{\Sigma_\nabla}$$
be the toric morphism induced by the refinement $\Pi$ of $\Sigma_\nabla$. It restricts to a proper morphism
$$\pi_0:\Y_0\to \X_0.$$

We now prove that $W_0$ is the strict transform of $\X_0$. Let $E=\operatorname{Exc}(\pi)$. On $\P_\Pi\setminus E$, the morphism $\pi$ is an isomorphism. Since $v_{123}$ and $u_{456}$ are invertible there, the equations
$$
h_1|_{\psi=0}=v_{123}\th_1,\qquad h_2|_{\psi=0}=u_{456}\th_2
$$
are equivalent on $\P_\Pi\setminus E$ to $\th_1=0$ and $\th_2=0$. Hence $W_0$ agrees with $\pi^{-1}(\X_0)$ away from $E$. We next show that $W_0$ has no irreducible component contained in $E$. Since $W_0$ is three-dimensional, it is enough to check that
$$
\dim V(x_\rho,\th_1,\th_2)\leq 2
$$
for every exceptional ray generator $\rho\in\Pi(1)^{gen}\setminus\Sigma_\nabla(1)^{gen}$, where $x_\rho$ denotes the Cox coordinate corresponding to $\rho$. This is verified on the affine toric charts. Thus no irreducible component of $W_0$ is contained in $E$.

We prove next that $W_0$ is reduced. Since $W_0=V(\th_1,\th_2)$ is a complete intersection in the smooth toric variety $\P_\Pi$, it is Cohen--Macaulay. Therefore it is enough to show that it is generically reduced. By the previous paragraph, every generic point of $W_0$ lies outside $E$. On $\P_\Pi\setminus E$, the scheme $W_0$ agrees with $\pi^{-1}(\X_0)$. Since $\X_0$ is normal and integral by Proposition~\ref{th:X0-normal}, it is regular at its generic point. Thus $W_0$ is regular at its generic points, hence generically reduced. Being Cohen--Macaulay, $W_0$ is reduced. Therefore $W_0$ is the strict transform of $\X_0$, where the strict transform is taken with its reduced induced scheme structure. Since $\X_0$ is integral by Proposition~\ref{th:X0-normal}, its strict transform $W_0$ is integral.

It remains to prove the scheme-theoretic decomposition of $\Y_0$. Since
$$
h_1|_{\psi=0}=v_{123}\th_1,\qquad h_2|_{\psi=0}=u_{456}\th_2,
$$
we have
$$
\Y_0=V(v_{123}\th_1,u_{456}\th_2).
$$
Set-theoretically,
$$
V(v_{123}\th_1,u_{456}\th_2)
=
V(\th_1,\th_2)\cup V(v_{123},\th_2)\cup V(\th_1,u_{456})\cup V(v_{123},u_{456}).
$$
The last term is empty because there is no cone in $\Pi$ containing both
$v_{123}$ and $u_{456}$. Hence
$$
|\Y_0|=|W_0|\cup |W_1|\cup |W_2|.
$$

We now check the equality scheme-theoretically. Since
$V(v_{123},u_{456})=\emptyset$, the open subsets $D(v_{123})$ and
$D(u_{456})$ cover $\P_\Pi$. On $D(v_{123})$, the variable $v_{123}$ is
invertible, so
$$
(v_{123}\th_1,u_{456}\th_2)=(\th_1,u_{456}\th_2).
$$
Because $u_{456}$ does not appear in $\th_1$ and $\th_2$, we have
$$
(\th_1,u_{456}\th_2)=(\th_1,\th_2)\cap(\th_1,u_{456}).
$$
Thus, on $D(v_{123})$, $\Y_0$ is the scheme-theoretic union of $W_0$ and
$W_2$. Similarly, on $D(u_{456})$, the variable $u_{456}$ is invertible and
$$
(v_{123}\th_1,u_{456}\th_2)=(v_{123}\th_1,\th_2)
=(\th_1,\th_2)\cap(v_{123},\th_2),
$$
because $v_{123}$ does not appear in $\th_1$ and $\th_2$. Hence, on
$D(u_{456})$, $\Y_0$ is the scheme-theoretic union of $W_0$ and $W_1$. Since
$D(v_{123})$ and $D(u_{456})$ cover $\P_\Pi$, the decomposition
$$
\Y_0=W_0\cup W_1\cup W_2
$$
holds scheme-theoretically.
\end{proof}

\section{Picard--Fuchs equation and infinity} \label{sec:inf}
We begin by recalling the geometric setup. There is a smooth projective toric variety $\P_{\Pi}$, associated with a smooth complete fan $\Pi$. The support of $\Pi$ lies in the real vector space $N_{\R} = N \otimes \R$, and there is a canonical embedding of the torus $\T_N = N \otimes_{\Z} \mathbb \C^{\times}$ into $\P_{\Pi}$. We consider a family of projective varieties $\Y_{\psi}$ over $\P^1$, with total space $\Y \subset \P_{\Pi} \times \P^1$, and structural morphism $f : \Y \to \P^1$. Each fiber $\Y_{\psi}$ is a projective subvariety of $\P_{\Pi}$. We denote by $U = \P^1 -  \{0,\infty, 1,\zeta_6,...,\zeta_6^5\}$ the open subset where the fibers are smooth, where $\zeta_6 = \exp(2 \pi i/6)$ is a primitive sixth root of unity. The family $\Y_{\psi} = V(h_1,h_2)$ is defined by equations \eqref{eq:h1} and \eqref{eq:h2}. We write $Y_{1,\psi} = V(h_1), Y_{2,\psi} = V(h_2)$ for the corresponding hypersurfaces in $\P_{\Pi}$, so by definition, $\Y_{\psi} = Y_{1,\psi} \cap Y_{2,\psi}$.

We will construct a homological class $\gamma(\psi) \in H_3(\Y_{\psi})$ (see Corollary \ref{cycle-lemma-family}) such that 
$$\tube_{1,\psi} \circ \tube_{2,\psi} (\gamma(\psi)) = (S^1)^5,$$ 
where $(S^1)^5 \in H_5(\T_N - (Y_{1,\psi} \cup Y_{2,\psi}))$ denotes a homological class of a standard cycle defined by the equations $|t_1|=...=|t_5|=|t_6|=1$ in the torus $\T_N$. See equations \eqref{eq:tube1} and \eqref{eq:tube2} for a description of the corresponding tube maps. This cycle will allow us to construct a global top-form $\omega = \omega(\psi) \in H^3(\Y_{\psi},\C)$, which varies holomorphically with the parameter $\psi$.

We utilize the toric Euler form $\Omega$, described, for example, in \cite[p369]{CLS}. The form is defined up to a sign $\pm 1$, but once an ordering of the basis $t_1,...,t_5 \in M$ is fixed, this ambiguity is resolved. We choose the ordering as written above. The form $\Omega$ is a differential $5$-form on the Cox space $\C^{\Pi(1)} - Z(\Pi)$. After dividing by a polynomial of the appropriate Cox degree, the form descends to a differential form on $\P_{\Pi}$ with meromorphic poles. Consider the following family over $\P^1$ of meromorphic differential forms on $\P_\Pi$:
\begin{equation} \label{etacirc}
    \omega^{\circ}(\psi) = \frac{(3\psi)^2 \Omega}{h_1 h_2}.
\end{equation}
The factor $(3\psi)^2$ is chosen so that the resulting residue form extends across $\psi=\infty$. The restriction of this form on the standard open torus $\T_N - (Y_{1,\psi} \cup Y_{2,\psi})$ is
\begin{equation} \label{eq:etacirc-def}
    \omega^{\circ}|_{\T_N - (Y_{1,\psi} \cup Y_{2,\psi})} = \frac{(3\psi)^2}{(3 \psi - t_4 - t_5 - t_6)(3 \psi - t_1 - t_2 - t_3)} \frac{dt_1}{t_1} \wedge ... \wedge \frac{dt_5}{t_5},
\end{equation}
where $t_1,\dots,t_5,t_6$ are standard characters on the torus $\T_N$; see equation \eqref{eq:m-ti-coordinates}.  
The fiber $\omega^{\circ}(\psi)$ is a global section of the sheaf $K_{\P_{\Pi}}(Y_{1,\psi} + Y_{2,\psi})$, where $Y_{1,\psi} = V(h_1)$ and $Y_{2,\psi} = V(h_2)$ are hypersurfaces in $\P_{\Pi}$, and $h_1, h_2$ are defined by equations \eqref{eq:h1}, \eqref{eq:h2}. Here, by $K_{\P_{\Pi}}$ we mean the canonical sheaf, also we will use $K_{\Y/\P^1}$ for the relative canonical sheaf of the family $\Y_{\psi}$. 

\begin{equation}\label{eq:omega-def}
    \omega(\psi) = \frac{1}{(2 \pi i)^3} \operatorname{res}_{2,\psi} \operatorname{res}_{1,\psi} \omega^{\circ}(\psi).
\end{equation} 
For each fixed $\psi \in U$, the residue maps
    $$\operatorname{res}_{1,\psi} : H^5(\P_{\Pi} - (Y_{1,\psi} \cup Y_{2,\psi})) \to H^4(Y_{1,\psi} - \Y_{\psi}),$$
    $$\operatorname{res}_{2,\psi} : H^4(Y_{1,\psi} - \Y_{\psi}) \to H^3(\Y_{\psi})$$
are adjoint to the tube maps
\begin{equation} \label{eq:tube2}
\operatorname{tube}_{1,\psi} : H_4(Y_{1,\psi} - \Y_{\psi}) \to H_5(\P_{\Pi} - (Y_{1,\psi} \cup Y_{2,\psi})),
\end{equation}
\begin{equation} \label{eq:tube1}
\operatorname{tube}_{2,\psi} : H_3(\Y_{\psi}) \to H_4(Y_{1,\psi} - \Y_{\psi}).
\end{equation}
The following lemma shows that these fiberwise forms arise from a single global section of the relative canonical sheaf. In particular, it justifies the normalization by the factor $(3\psi)^2$ and shows that the resulting relative form extends across $\psi=\infty$. 

\begin{lemma} \label{lemma:omegaconstruction} The section $\psi \mapsto \omega(\psi)$ corresponds to a global section of $f_\ast K_{\Y/\P^1}$. 
\end{lemma}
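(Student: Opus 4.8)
The plan is to show that the assignment $\psi \mapsto \omega(\psi)$ is a \emph{regular} (holomorphic) section of the line bundle $f_\ast K_{\Y/\P^1}$ over all of $\P^1$, including the special fibers. First I would check that away from the singular fibers this is essentially automatic: over $U$ the morphism $f$ is smooth and proper, so $f_\ast K_{\Y/\P^1}$ is a line bundle on $U$, and the residue construction \eqref{eq:omega-def} is manifestly algebraic in $\psi$ in the affine coordinate, since $\omega^\circ(\psi) = (3\psi)^2\Omega/(h_1 h_2)$ depends algebraically on $\psi$ and taking iterated Poincar\'e residues along the smooth divisors $Y_{1,\psi}, Y_{2,\psi}$ is a morphism of coherent sheaves. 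Hence $\psi \mapsto \omega(\psi)$ is a regular section of $f_\ast K_{\Y/\P^1}$ over $U$; the content of the lemma is that it extends regularly across $\psi \in \mu_6 \cup \{0,\infty\}$.

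Next I would handle the points $\psi \in \mu_6$. Here $\Y_\psi$ has a single ordinary double point (Proposition~\ref{th:odp-point}), but the total space $\Y$ is smooth at that point (the total-space smoothness proposition: the $\partial/\partial\psi$ column rescues the Jacobian), and $f$ is flat with reduced fibers. The key point is that $K_{\Y/\P^1}$ is a line bundle on the smooth variety $\Y$ and $\omega^\circ(\psi)$, viewed as a section of $K_{\P_\Pi}(Y_{1,\psi}+Y_{2,\psi})$ varying in the family, has at worst simple poles along $Y_{1},Y_{2}$, which are exactly cancelled by the iterated residue. One checks on the explicit torus chart \eqref{eq:etacirc-def} --- where $\omega^\circ$ is $(3\psi)^2 (3\psi - t_1-t_2-t_3)^{-1}(3\psi-t_4-t_5-t_6)^{-1}\,\bigwedge_{i=1}^5 dt_i/t_i$ --- that the residue along $Y_{1,\psi}\cap Y_{2,\psi}$ is a nowhere-vanishing holomorphic top-form on the \emph{smooth locus} of $\Y_\psi$, and that it extends over the ODP because an ODP of a threefold is a canonical (in fact rational Gorenstein) singularity, so sections of $K$ of the resolution push forward; concretely, since $\Y_\psi$ is a reduced Cohen--Macaulay complete intersection with only an isolated hypersurface singularity, $K_{\Y_\psi}$ is a line bundle (adjunction for complete intersections) and $\omega(\psi)$ is a regular section of it, with no pole created at the node. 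Then by cohomology-and-base-change / Grauert (the fibers being reduced and $f$ flat proper, with $h^0(\Y_\psi, K_{\Y_\psi}) = 1$ constant), $f_\ast K_{\Y/\P^1}$ is a line bundle near these points and the section extends regularly.

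For $\psi = 0$ the fiber is the simple normal crossing divisor $\Y_0 = W_0\cup W_1\cup W_2$ (Theorem~\ref{th:geometry-summary}), and here one must be careful that $\omega(\psi)$ does not acquire a pole along the components $W_1, W_2$. The point is that $\omega^\circ(\psi) = (3\psi)^2\Omega/(h_1 h_2)$, and at $\psi = 0$ one has $h_1 = v_{123}\,\tilde h_1$ and $h_2 = u_{456}\,\tilde h_2$; a priori $\omega^\circ(0)$ would have poles along $V(v_{123})$ and $V(u_{456})$, but these are precisely killed by the factor $(3\psi)^2$ --- this is exactly the reason the prefactor $(3\psi)^2$ was inserted in \eqref{etacirc}. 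More precisely, after the change of variables in Proposition~\ref{th:parameter-elimination} the form is $z^{?}\Omega/(\cdots)$ in the coordinate $z = (3\psi)^{-6}$, and one tracks the order of vanishing to see that $\omega(\psi)$ extends to a regular, and in fact nowhere-zero on $W_0$ (the strict transform of $\X_0$), section over $\psi = 0$; its restrictions to $W_1, W_2$ vanish. Since $\Y_0$ is SNC and reduced, $K_{\Y_0}$ is again a line bundle (by adjunction inside the smooth $\Y$: $K_{\Y_0} = (K_\Y \otimes \O(\Y_0))|_{\Y_0}$), and one verifies $h^0(\Y_0, K_{\Y_0})$ is still $1$, so base change applies and the section extends. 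The case $\psi = \infty$ is symmetric: after the substitution $\psi \mapsto \psi^{-1}$ the family has good reduction in the sense needed (the factor $(3\psi)^2$ was chosen for this), and the same residue computation on the torus chart shows $\omega$ extends regularly; this is consistent with $\infty$ being a MUM point, where $\omega$ is the canonical single-valued holomorphic form.

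The main obstacle I expect is the behavior at $\psi = 0$: one has to confirm, by an explicit local computation in the affine charts of $\P_\Pi$ provided by the triangulation $\tau(P)$ --- for instance the charts appearing in Lemma~\ref{th:explicit-coordinates-lemma} --- that the iterated residue of $\omega^\circ(0)$ really is regular across the new components $W_1, W_2$ (i.e.\ that the zero of $(3\psi)^2$ exactly compensates the poles of $\Omega/(v_{123}\tilde h_1 \cdot u_{456}\tilde h_2)$ along $V(v_{123})$ and $V(u_{456})$, with nothing left over), rather than merely a meromorphic section of $f_\ast K_{\Y/\P^1}$ with a pole or zero at $0$. Establishing this, together with the constancy of $h^0(\Y_\psi, K_{\Y_\psi}) \equiv 1$ across all special fibers (so that $f_\ast K_{\Y/\P^1}$ is a genuine line bundle globally), is the heart of the argument; everything else is bookkeeping with Poincar\'e residues and adjunction for complete intersections.
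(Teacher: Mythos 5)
Your proposal takes a genuinely different route from the paper's. You frame the problem as an \emph{extension} problem: check holomorphicity on $U$ first, then argue fiber--by--fiber over the special points $\mu_6$, $0$, $\infty$ that the section extends, invoking base change for $f_\ast K_{\Y/\P^1}$, adjunction for the individual fibers, rationality of ODPs, and a chart computation at $\psi=0$. The paper avoids all of this by working on the total space $\Y\subset\P_\Pi\times\P^1$: it considers the single global rational top-form $\omega^\circ\wedge d\psi$ on $\P_\Pi\times\P^1$, observes that it lies in $K_{\P_\Pi\times\P^1}(Y_1+Y_2)(2[\infty])$, restricts to $\Y$ and applies adjunction for the complete intersection $\Y=Y_1\cap Y_2$ (which needs no smoothness of $\Y$, only that it is a complete intersection in a smooth ambient), and then cancels $K_{\P^1}(2[\infty])\cong\O_{\P^1}$ to land directly in $H^0(\Y,K_{\Y/\P^1})$. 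After this, \eqref{eq:omega-def} is identified as the fiberwise description over smooth fibers. There is no extension problem to solve, because the section is constructed globally from the start. What the paper's route buys is precisely the uniformity: one never needs constancy of $h^0(\Y_\psi,K_{\Y_\psi})$, no explicit chart computation at $\psi=0$, and no special handling of $\psi=\infty$.

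Two concrete weaknesses in your route as written. First, the claim that $\psi=\infty$ is ``symmetric'' to $\psi=0$ is not justified and is misleading: $\Y_\infty$ is the highly degenerate fiber $V(\prod u_{ijk},\prod v_{ijk})$ (a union of many toric strata), and it is precisely over $\Y_\infty$ that the total space $\Y$ fails to be smooth, so the argument you sketch at $\psi=0$ does not transfer. Second, you have identified --- honestly --- that the base-change step requires $h^0(\Y_\psi,K_{\Y_\psi})\equiv 1$ over all special fibers and that the residue computation at $\psi=0$ must be checked in charts; these are not carried out, and the $h^0$ computation on the reducible SNC fiber $\Y_0$ (and, worse, on $\Y_\infty$) would be nontrivial. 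Both gaps disappear once one switches to the adjunction argument on $\Y$.
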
 
\begin{proof} 
Consider the form $\omega^{\circ} \wedge d\psi$. Since $d\psi$ is a (trivializing) section of $K_{\P^1}(2[\infty])$, this corresponds to a top-form on $\P_{\Pi}\times \P^1$ with simple poles along $Y_1$ and $Y_2$, and a pole of order 2 at $\infty$, hence of $K_{\P_\Pi \times \P^1}(Y_1+Y_2)(2[\infty]).$ Restricting this sheaf to $\Y$ and applying adjunction provides a natural sequence of isomorphisms 
\begin{equation}\label{eq:adjunction} K_{\P_\Pi \times \P^1}(Y_1+Y_2)|_{\Y}(2[\infty]) \cong K_\Y(2[\infty]) \cong K_{\Y/\P^1} \otimes K_{\P^1}(2\cdot [\infty]) \cong  K_{\Y/\P^1}.
\end{equation}

The isomorphism \eqref{eq:adjunction} is locally given by sending a top-form in $K_\Y$, to  a section of $K_{\Y/\P^1}$, according to the rule:
$$\eta = \eta' \wedge d\psi \mapsto [\eta'].$$
This description makes sense around points where $\Omega_{\Y/\P^1}$ is locally free, i.e. the smooth points of $\Y_\psi$, and one sees by inspection that it is given by \eqref{eq:omega-def} for all $\psi$ with smooth $\Y_\psi$.
\end{proof}

\subsection{Existence of a cycle}
The goal of this subsection  (see Corollary \ref{cycle-lemma-family}) is to construct a homological cycle $\gamma(\psi) \in H_3(\Y_{\psi})$ such that 
$$\tube_{1,\psi} \circ \tube_{2,\psi} (\gamma(\psi)) = (S^1)^5,$$ 
where $(S^1)^5 \in H_5(\T_N - (Y_{1,\psi} \cup Y_{2,\psi}))$ denotes a standard cycle defined by the equations $|t_1|=...=|t_5|=|t_6|=1$ in the torus $\T_N$. The proof of the existence of such a cycle appears to be omitted from the related literature, so we include it for the sake of completeness.

In the following lemma, we will study the cycle $(S^1)^5$ in $\T_N \subseteq \P_\Pi$ given by $|t_i|=1$, where $t_i$ are the toric coordinates of $\T_N$. For $|\psi| > 1$, neither of the equations in \eqref{eq:h-inside-torus} have solutions with $|t_i|=1$, so in fact $(S^1)^5 \subseteq \T_N \setminus (Y_{1,\psi}\cup Y_{2, \psi})$. 

\begin{lemma}\label{tube-lemma}
For every $\psi \in U$, the following diagram    
{\small
$$
\begin{tikzcd}[column sep=small]
H_6(\P_{\Pi}-\Y_{\psi}) \arrow[rrrr, "\delta"] &  &  &  & H_5(\P_{\Pi}-(Y_{1,\psi} \cup Y_{2,\psi})) \\
  &  &  & H_4(Y_{1,\psi}-\Y_\psi) \arrow[ru, "\operatorname{tube}_{1,\psi}"] &  \\
  &  & H_3(\Y_{\psi}) \arrow[lluu, "\operatorname{tube}_{\psi}"] \arrow[ru, "\operatorname{tube}_{2,\psi}"] &  &                      \end{tikzcd}
$$}
commutes. Here, $\delta$ is the Mayer-Vietoris homomorphism, and $\Y_{\psi} = Y_{1,\psi} \cap Y_{2,\psi}$ is the smooth complete intersection of the smooth hypersurfaces $Y_{1,\psi}$ and $Y_{2,\psi}$.
\end{lemma}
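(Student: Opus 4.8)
The plan is to verify the commutativity of the triangle by decomposing it into the two constituent triangles and a compatibility statement, then checking each piece at the level of chains using standard tube/Leray–Thom–Gysin machinery. First I would recall the basic setup: for a smooth hypersurface $Y \subset X$ with complement $X - Y$, the tube map $H_k(Y) \to H_{k+1}(X-Y)$ sends a cycle $c$ to the boundary of a tubular (disk-bundle) neighborhood restricted over $c$, i.e. to $\partial(\text{tube over } c)$, and this is precisely the construction dual to the Leray residue. Applying this twice — first along $Y_{1,\psi}$, then along $Y_{2,\psi}$ inside $Y_{1,\psi}$ — gives $\operatorname{tube}_{1,\psi}\circ\operatorname{tube}_{2,\psi}$. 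The claim is that this composite agrees with $\operatorname{tube}_\psi$ followed by the Mayer–Vietoris connecting map $\delta$ for the open cover of $\P_\Pi - \Y_\psi$ by $\P_\Pi - Y_{1,\psi}$ and $\P_\Pi - Y_{2,\psi}$ (whose intersection is $\P_\Pi - (Y_{1,\psi}\cup Y_{2,\psi})$ and whose union is $\P_\Pi - \Y_\psi$).

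The key step is to identify $\operatorname{tube}_\psi : H_3(\Y_\psi) \to H_6(\P_\Pi - \Y_\psi)$ with a single "bi-tube" construction and then compare with the iterated one. Concretely, a class $\gamma \in H_3(\Y_\psi)$ can be thickened inside $\P_\Pi$ to the boundary of a neighborhood of $\Y_\psi$, which is an $S^1$-bundle-of-$S^1$-bundles type object: since $\Y_\psi = Y_{1,\psi}\cap Y_{2,\psi}$ is a transverse intersection of smooth hypersurfaces, a tubular neighborhood of $\Y_\psi$ in $\P_\Pi$ looks locally like $\Y_\psi \times \Delta^2$ with coordinates $(z_1,z_2)$ cutting out $Y_{1,\psi},Y_{2,\psi}$, and $\partial$ of this neighborhood over $\gamma$ carries a class mapping to $H_6$. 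I would then observe that the Mayer–Vietoris boundary $\delta$, computed by writing a $6$-cycle representing $\operatorname{tube}_\psi(\gamma)$ as a sum of a chain in $\P_\Pi - Y_{1,\psi}$ and a chain in $\P_\Pi - Y_{2,\psi}$ and taking the boundary of either piece, lands exactly on the "$|z_1| = \epsilon$" torus factor, which is $\operatorname{tube}_{1,\psi}$ applied to the "$|z_2| = \epsilon$" class, which in turn is $\operatorname{tube}_{2,\psi}(\gamma)$. This is essentially the statement that the double residue (Leray iterated residue along a normal crossing) is computed symmetrically by the iterated tube, and is the content of, e.g., the Gelfand–Leray / Pham theory of iterated tubes around complete intersections.

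The remaining checks are routine: that all three maps are well-defined on the stated (Borel–Moore) homology groups and are compatible with the family structure over $U$ (which follows from the local triviality of the disk-bundle neighborhoods as $\psi$ varies in $U$, where $\Y_\psi$, $Y_{1,\psi}$, $Y_{2,\psi}$ stay smooth and transverse — this is where smoothness over $U$, established in Section~\ref{sec:geometry}, is used); and that orientation conventions are chosen consistently so the diagram commutes on the nose rather than up to sign (I would fix the orientation of each $S^1$ normal circle once and for all, matching the sign convention implicit in \eqref{eq:omega-def} with its factor $(2\pi i)^{-5}$). The main obstacle I anticipate is purely bookkeeping: making the "local model near $\Y_\psi$ is a bi-disk bundle" argument rigorous enough to see that the Mayer–Vietoris coboundary is literally the first tube map, and tracking signs through the two successive residue operations so that the final identification $\delta\circ\operatorname{tube}_\psi = \operatorname{tube}_{1,\psi}\circ\operatorname{tube}_{2,\psi}$ holds with the correct sign; once the local picture is pinned down, globalizing over $\gamma$ and over $\psi \in U$ is formal.
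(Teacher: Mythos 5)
Your proposal is correct and takes essentially the same route as the paper: both work in the local bi-disk model near the transverse intersection $\Y_\psi = Y_{1,\psi}\cap Y_{2,\psi}$, represent $\operatorname{tube}_\psi(\gamma)$ as a sum of a chain avoiding $Y_{1,\psi}$ and a chain avoiding $Y_{2,\psi}$, and identify the Mayer--Vietoris boundary of that decomposition with the iterated tube $\operatorname{tube}_{1,\psi}\circ\operatorname{tube}_{2,\psi}(\gamma)$ via the torus $\{|z_1|=|z_2|=\epsilon\}$. The paper just makes your decomposition explicit by slicing along $\{|z_1|+|z_2|=2\}$ into the pieces $\{|z_1|\geq 1\}$ and $\{|z_2|\geq 1\}$, whose common boundary is exactly the $(S^1)^2$-bundle over $c$.
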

\begin{proof}
We will work in the analytic category and verify the statement locally at the level of chains. The global statement then follows from the fact that the normal bundle $N_{\Y_{\psi}}$ splits as 
$$N_{\mathcal \Y_\psi/\mathbb P_\Pi} \cong N_{\Y_\psi/Y_{1,\psi}} \oplus N_{Y_{1,\psi}/\mathbb P_\Pi}|_{\Y_\psi}.$$

Fix $\psi \in U$. Let $\C^5 \cong U \subset \P_{\Pi}$ be a local patch such that $U \cap Y_{1,\psi}, U \cap Y_{2,\psi}$ are locally defined by equations $z_1 = 0$, $z_2 = 0$ respectively. Let $c \in Z_3(\Y_{\psi})$ be any $3$-cycle. We have:

$$\tube_{2,\psi}(c) \cap U = \{z_1 = 0, |z_2| = 1\} \times c|_U \subset \{0\} \times S^1 \times \C^3,$$
$$\tube_{1,\psi} \circ \tube_{2,\psi} (c) \cap U = \{|z_1| = |z_2| = 1\} \times c|_U  \subset S^1 \times S^1 \times \C^3,$$
$$\tube_{\psi}(c) = \{|z_1| + |z_2| = 2\} \times c|_U \subset B_2^1 \times B_2^1 \times \C^3.$$

Here, $B_2^1$ denotes the ball in $\C$ with radius $2$. We also think of $c \cap U$ as a subset of $\C^3$. In these local coordinates, define two chains:
$$\Sigma_1 = \{|z_1| \geq 1, |z_1| + |z_2| = 2\} \times c|_U, \qquad \Sigma_2 = \{|z_2| \geq 1, |z_1| + |z_2| = 2\} \times c|_U.$$
Note that $\tube_{\psi}(c) = \Sigma_1 + \Sigma_2$ so we see that $\delta \tube_{\psi}(c) = \partial \Sigma_1 = \tube_{1,\psi} \circ \tube_{2, \psi}(c)$
and the claim follows.
\end{proof}

\begin{lemma} \label{topology-lemma}
To verify that a cycle $\gamma \in H_5(\P_{\Pi}-(Y_{1,\psi} \cup Y_{2,\psi}))$ lies in the image of $\tube_{1,\psi} \circ \tube_{2,\psi}$, it suffices to find a class $z \in H_6(\P_{\Pi}-\Y_{\psi})$ such that $\delta(z) = \gamma$ and $i_* [z] = 0$, where $i : \P_{\Pi} - \Y_{\psi} \to \P_{\Pi}$ is the canonical open embedding. Here, $\delta$ is the Mayer-Vietoris homomorphism.
\end{lemma}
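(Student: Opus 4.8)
The plan is to show that the two conditions on $z$ force it to lie in the image of the single tube map $\tube_\psi \colon H_3(\Y_\psi) \to H_6(\P_\Pi - \Y_\psi)$, and then to conclude using Lemma~\ref{tube-lemma}. Recall from that lemma that $\delta \circ \tube_\psi = \tube_{1,\psi} \circ \tube_{2,\psi}$, where $\delta$ is the Mayer--Vietoris boundary for the open cover $\P_\Pi - \Y_\psi = (\P_\Pi - Y_{1,\psi}) \cup (\P_\Pi - Y_{2,\psi})$; this cover is legitimate precisely because $\Y_\psi = Y_{1,\psi} \cap Y_{2,\psi}$, and its double overlap is $\P_\Pi - (Y_{1,\psi} \cup Y_{2,\psi})$. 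So once we know $z = \tube_\psi(c)$ for some $c \in H_3(\Y_\psi)$, we immediately get $\gamma = \delta(z) = \tube_{1,\psi}\circ\tube_{2,\psi}(c)$, which is what is claimed.

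To prove $\ker(i_*) = \im(\tube_\psi)$ inside $H_6(\P_\Pi - \Y_\psi)$, I would invoke the long exact homology sequence of the pair $(\P_\Pi, \P_\Pi - \Y_\psi)$. For $\psi \in U$ the fiber $\Y_\psi$ is a smooth compact submanifold of the smooth compact $5$-fold $\P_\Pi$ of complex codimension $2$; deleting a closed tubular neighborhood of $\Y_\psi$ shows that $\P_\Pi - \Y_\psi$ is homotopy equivalent to a compact manifold with boundary, so its Borel--Moore and singular homologies agree and $i_*$ is the ordinary push-forward. Excision replaces $H_\bullet(\P_\Pi, \P_\Pi - \Y_\psi)$ by the relative homology of the normal disc bundle modulo the normal sphere bundle, and the Thom isomorphism for the oriented real rank-$4$ normal bundle gives $H_7(\P_\Pi, \P_\Pi - \Y_\psi) \cong H_3(\Y_\psi)$. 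The key point, and the step I expect to be the main obstacle, is that under this identification the connecting homomorphism $\partial$ of the pair sequence becomes exactly $\tube_\psi$ (up to sign): the Thom class of a $3$-cycle $c$ is represented by the ``solid tube'' $c \times (\text{normal disc})$, a relative $7$-cycle, whose boundary is the $S^3$-bundle over $c$, i.e.\ $\tube_\psi(c)$ — consistent with the explicit local description $\{|z_1|+|z_2|=2\}\times(c\cap U)$ used in the proof of Lemma~\ref{tube-lemma}. Making this precise requires only a careful bookkeeping of orientations in the Thom/Leray picture; alternatively one can argue directly by transversality, taking a $7$-chain in $\P_\Pi$ bounding $z$, making it transverse to $\Y_\psi$ so that it meets $\Y_\psi$ in a $3$-cycle $c$, and checking that removing a tubular neighborhood of $c$ exhibits $z - \tube_\psi(c)$ as a boundary in $\P_\Pi - \Y_\psi$.

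Granting this, the conclusion is immediate. Exactness of
$$H_7(\P_\Pi, \P_\Pi - \Y_\psi) \xrightarrow{\ \partial\ } H_6(\P_\Pi - \Y_\psi) \xrightarrow{\ i_*\ } H_6(\P_\Pi),$$
combined with the identification $\partial = \tube_\psi$, gives $\ker(i_*) = \im(\tube_\psi)$. Hence a class $z$ with $\delta(z) = \gamma$ and $i_*[z] = 0$ can be written $z = \tube_\psi(c)$ for some $c \in H_3(\Y_\psi)$, and applying Lemma~\ref{tube-lemma} we obtain $\gamma = \delta(\tube_\psi(c)) = \tube_{1,\psi}\circ\tube_{2,\psi}(c)$. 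Therefore $\gamma$ lies in the image of $\tube_{1,\psi}\circ\tube_{2,\psi}$, which proves the lemma.
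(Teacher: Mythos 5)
Your proof is correct and takes essentially the same route as the paper: both use the long exact sequence of the pair $(\P_\Pi, \P_\Pi - \Y_\psi)$, identify $H_7(\P_\Pi, \P_\Pi - \Y_\psi)$ with $H_3(\Y_\psi)$ (the paper cites Poincar\'e--Lefschetz duality; you go via excision and the Thom isomorphism for the normal bundle, which is the standard proof of that duality), recognize the connecting map as $\tube_\psi$, and then invoke Lemma~\ref{tube-lemma}. Your version is slightly more explicit about why the connecting homomorphism is the tube map, but there is no substantive difference.
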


\begin{proof}
By writing the long exact sequence of relative homology for the pair $(\P_{\Pi},\P_{\Pi} - \Y_{\psi})$, we get:

             $$... \to H_7(\P_{\Pi},\P_{\Pi}-\Y_{\psi}) \to H_6(\P_{\Pi} - \Y_{\psi}) \to H_6(\P_{\Pi}) \to H_6(\P_{\Pi}, \P_{\Pi} - \Y_{\psi}) \to ...$$
    Applying Poincar\'e-Lefschetz and Poincar\'e duality, we find that $H_7(\P_{\Pi},\P_{\Pi}-\Y_{\psi}) \cong H^3(\Y_{\psi}) \cong H_3(\Y_{\psi})$, and under these identifications, the first morphism in the sequence above is isomorphic to the tube map:
    $$H_3(\Y_{\psi}) \xrightarrow{\operatorname{tube}_\psi} H_6(\P_{\Pi} - \Y_{\psi}).$$ 
    It follows that the image of $\tube_\psi$ is the kernel of the map $H_6(\P_{\Pi} - \Y_{\psi}) \to H_6(\P_{\Pi})$.
    By Lemma \ref{tube-lemma}, we know that $\delta \circ \tube_{\psi} = \tube_{1, \psi} \circ  \tube_{2, \psi}$, which completes the proof. 
\end{proof} 
 
\begin{lemma} \label{cycle-lemma}
    For every $\psi \in U$ such that $|\psi|>1$, there exists a cycle $\gamma \in H_3(\Y_{\psi})$ such that 
    $$\operatorname{tube}_{1,\psi} \circ \operatorname{tube}_{2,\psi} (\gamma) = (S^1)^5,$$ where $(S^1)^5 \in H_5(\P_{\Pi}-(Y_{1,\psi} \cup Y_{2,\psi}))$ is a 5-cycle defined by the equations $|t_1|=...=|t_5|=1$ in $\mathbb T_N$.
\end{lemma}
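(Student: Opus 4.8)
The plan is to deduce the statement from Lemma~\ref{topology-lemma}: I must produce a $6$-cycle $z\in H_6(\P_\Pi-\Y_\psi)$ whose Mayer--Vietoris image is $\delta(z)=(S^1)^5$ and which satisfies $i_*[z]=0$ in $H_6(\P_\Pi)$. I would take $z=z_A-z_B$, where $z_A$ and $z_B$ are the explicit semialgebraic chains
\[
z_A=\overline{\{\,|t_1|=|t_2|=|t_3|=|t_4|=1,\ 0<|t_5|\le1\,\}},\qquad z_B=\overline{\{\,|t_1|=1,\ 0<|t_2|\le1,\ |t_4|=|t_5|=|t_6|=1\,\}},
\]
the closures being taken in $\P_\Pi$. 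Each is a compact $6$-manifold with boundary $(S^1)^5$. The key point is the choice of which coordinates are pinned to the unit circle: since $h_1|_{\T_N}=3\psi-t_1-t_2-t_3$ depends only on $t_1,t_2,t_3$ (see \eqref{eq:h-inside-torus}) and $|t_1+t_2+t_3|\le 3<|3\psi|$ along $z_A$ when $|\psi|>1$, the chain $z_A$ misses $Y_{1,\psi}$; symmetrically $z_B$ misses $Y_{2,\psi}$. Hence $z$ is a cycle in $(\P_\Pi-Y_{1,\psi})\cup(\P_\Pi-Y_{2,\psi})=\P_\Pi-\Y_\psi$, and reading off this Mayer--Vietoris decomposition gives $\delta(z)=[\partial z_A]=[(S^1)^5]$ in $H_5(\P_\Pi-(Y_{1,\psi}\cup Y_{2,\psi}))$ — noting that $(S^1)^5$ indeed avoids $Y_{1,\psi}\cup Y_{2,\psi}$ once $|\psi|>1$.

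A step that needs care, but is not the main difficulty, is confirming that the closures $z_A,z_B$ are honest chains with the advertised boundary and that they stay away from the deep toric strata. Inside $z_A$ the only escape to $\partial\P_\Pi$ is the degeneration $t_5\to 0$, $t_6\to\infty$, which runs out along the primitive vector $e_5-e_6$; because of the edgewise subdivision this is exactly the ray generator $v_{455}=\tfrac13(v_4+2v_5)\in\Pi(1)^{gen}$, so $z_A$ touches $\partial\P_\Pi$ only in the open torus orbit of the divisor $D_{v_{455}}$, on which $t_1,\dots,t_4$ are still units and $h_1$ restricts to $3\psi-t_1-t_2-t_3\ne 0$. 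The same analysis for $z_B$ uses $u_{122}=e_2-e_3=\tfrac13(u_1+2u_2)\in\Pi(1)^{gen}$ and the divisor $D_{u_{122}}$. In particular $z_A$ is contained in $\T_N$ together with the open orbit of $D_{v_{455}}$, and $z_B$ in $\T_N$ together with the open orbit of $D_{u_{122}}$, so each of the two chains avoids every torus orbit of codimension at least $2$.

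The main obstacle is verifying $i_*[z]=0$ in $H_6(\P_\Pi)$. I would argue this by intersection theory: since $\P_\Pi$ is smooth and projective, the group $H_4(\P_\Pi)$ is generated by the classes $[V(\sigma)]$ of the toric surfaces attached to the $3$-dimensional cones $\sigma\in\Pi(3)$, and the intersection pairing $H_6(\P_\Pi)\times H_4(\P_\Pi)\to\Z$ is perfect; so it suffices to check $[z]\cdot[V(\sigma)]=0$ for every $\sigma\in\Pi(3)$. But each $V(\sigma)$ is a union of torus orbits of codimension at least $3$, while, by the previous paragraph, $z_A$ and $z_B$ meet only $\T_N$ and one codimension-one open orbit apiece; hence $z$ is literally disjoint from each $V(\sigma)$, so all of these intersection numbers vanish. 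Once $i_*[z]=0$ is in hand, Lemma~\ref{topology-lemma} gives a class $\gamma\in H_3(\Y_\psi)$ with $\tube_{1,\psi}\circ\tube_{2,\psi}(\gamma)=\delta(z)=(S^1)^5$, which is the assertion.
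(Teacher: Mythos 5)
Your argument is correct and follows the same overall strategy as the paper: reduce via Lemma~\ref{topology-lemma} to producing a $6$-cycle $z\in H_6(\P_\Pi-\Y_\psi)$ with $\delta(z)=(S^1)^5$ and $i_*[z]=0$, and build $z$ as a sum of two disk-times-$(S^1)^4$ pieces, one avoiding $Y_{1,\psi}$ and the other avoiding $Y_{2,\psi}$. The differences are in the implementation. The paper picks two maximal cones $\sigma_1,\sigma_2\in\Pi(5)$, passes to the affine charts $U_{\sigma_i}$ with explicit coordinates $x_\bullet,y_\bullet$, and writes $z_1,z_2$ as $\{|x_1|\le1,\ |x_i|=1\}$ and $\{|y_1|\le1,\ |y_i|=1\}$; by contrast you describe $z_A,z_B$ directly in the torus coordinates $t_i$, which makes the bound $|t_1+t_2+t_3|\le3<|3\psi|$ immediate and makes the escape at the toric boundary transparent --- each piece touches exactly one codimension-one orbit, $O(\cone(v_{455}))$ resp.\ $O(\cone(u_{122}))$. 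The more substantive divergence is in the null-homology step: the paper uses that $H^*(\P_\Pi)$ is generated in degree $2$, reduces to checking $PD([z])\cdot PD(D_{\rho_1})\cdot PD(D_{\rho_2})\cdot PD(D_{\rho_3})=0$, and must then argue separately that the two intersections $z\cap\{x_1=0\}$ and $z\cap\{y_1=0\}$ are null-homologous because they are compactly supported in an affine chart. You instead pair against the toric surfaces $[V(\sigma)]$, $\sigma\in\Pi(3)$, which generate $H_4(\P_\Pi)$, and observe that $z$ is literally disjoint from all of them because it lives entirely in the two open codimension-$\le1$ strata. That removes the extra ``compact cycle in an affine chart'' argument and is cleaner. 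A couple of routine points you have asserted but not spelled out --- that $Y_{1,\psi}\cap O(\cone(v_{455}))$ is still cut out by $3\psi-t_1-t_2-t_3$ (which holds because a maximal cone of $\Pi$ containing $v_{455}$ can be taken inside some $V_j$, hence generated only by $v$-rays, so the $u$-monomials and $v_{455}$ do not enter the relevant Cox monomials), and that $\overline{z_A},\overline{z_B}$ really are topological $(S^1)^4\times D^2$ near the limiting orbit --- would need to be written out, and one should be a little careful with the sign in $z=z_A-z_B$, but none of these is a gap in the idea.
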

\begin{proof}
    We will write $Y_1,Y_2,\P$ instead of $Y_{1,\psi},Y_{2,\psi},\P_{\Pi}$ throughout the proof. Let us define two cones, which are maximal cones of the fan $\Pi$:
    $$\sigma_1 = \cone(v_{111},v_{112},v_{113},v_{114}, v_{115}) \in \Pi,$$
    $$\sigma_2 = \cone(u_{266},u_{366},u_{466},u_{566},u_{666}) \in \Pi.$$
    The dual cones to the cones above are:
    $$\sigma_1^{\vee} = \cone((-1,-2,-2,-2,-2,0)^T, (0,1,0,0,0,0)^T, (0,0,1,0,0,0)^T,$$ $$(1,1,1,2,1,0)^T, (1,1,1,1,2,0)^T) \subset{M_{\R}},$$
    $$\sigma_2^{\vee} = \cone((-1,1,0,0,0,0)^T, (-1,0,1,0,0,0)^T,(0,0,0,1,0,0)^T $$
    $$(0,0,0,0,1,0)^T,(1,-1,-1,-1,-1,0)^T) \subset M_{\R}.$$
    With the given ordering of the generators of the dual cones, we introduce coordinates:
    $$U_1 := U_{\sigma_1} = \spec \C[M \cap \sigma_1^{\vee}]  = \spec \C[x_1,...,x_5],$$
    $$U_2 := U_{\sigma_2} = \spec \C[M \cap \sigma_2^{\vee}] = \spec \C[y_1,...,y_5].$$
    The ordering of the coordinates corresponds to the given ordering of the generators of the dual cones. For example, $x_3 = t_3, y_1 = \frac{t_2}{t_1}$. Note that since $\sigma_1 \cap \sigma_2 = 0$, we have $U_1 \cap U_2 = \T_N$. We define our cycle as the sum of two parts $z=z_1+z_2$ such that $\supp z_1 \subset U_1 - Y_1 \subset \P - Y_1$ and $\supp z_2 \subset U_2 - Y_2 \subset \P - Y_2$. Hence $\supp(z_1+z_2) \subset \P - \Y_{\psi}$ and $\delta(z_1+z_2) = \partial(z_1)$.
    
    We now define:
    $$z_1 = \{(x_1,...,x_5) \in U_1 : |x_1| \leq 1, |x_2|=|x_3|=|x_4|=|x_5|=1\},$$
    $$z_2 = \{(y_1,...,y_5) \in U_2 : |y_1| \leq 1, |y_2|=|y_3|=|y_4|=|y_5|=1\},$$
    $$z=z_1+z_2.$$
    Notice that $z$ is indeed a cycle, which means that $\partial(z_1+z_2) = [(S^1)^5]-[(S^1)^5]=0$. Notice that we have the identity $\delta(z)=\partial(z_1)=(S^1)^5$, by definition of the Mayer--Vietoris homomorphism. Indeed, the Mayer--Vietoris homomorphism sends the cycle $z=z_1+z_2$ supported on $(U_1 - Y_1) \cup (U_2 - Y_2)$, with $z_1$ supported on the $U_1-Y_1$ and $z_2$ supported on the $U_2 - Y_2$, to the boundary of $z_1$, which is supported on the $(U_1-Y_1) \cap (U_2 - Y_2)$. 
    
    To check that $\supp{z_1} \subset U_1 - Y_1$ we have to compute equations that define the variety $Y_1 \cap U_1$. The computation gives us
    $$3 \psi - x_1^3x_2^2x_3^2x_4^2x_5^2 - x_2 - x_3 = 0,$$
    assuming that $(x_1,...,x_5) \in \supp z_1$ and $(x_1,...,x_5) \in Y_1 \cap U_1$, we can deduce
    $$3\psi = x_1^3x_2^2x_3^2x_4^2x_5^2 + x_2 + x_3,$$
    $$3|\psi| \leq |x_1^3x_2^2x_3^2x_4^2x_5^2| + |x_2| + |x_3| \leq 3,$$ which contradicts the assumption that $|\psi|>1$. Notice that the inequality $|x_i| \leq 1$ follows from the assumption $(x_1,...,x_5) \in \supp z_1$ and definition of $z_1$. Analogously, to prove $\supp(z_2) \subset U_2 - Y_2$, we must write equations that define $Y_2 \cap U_2$. The computation gives us
    $$3 \psi - y_4 - y_5 - y_1^2 y_2^2 y_3^2 y_4^2 y_5^3 = 0,$$
    the same sort of reasoning shows that $\supp(z_2) \subset U_2 - Y_2$. 

    The last thing we have to check is that the class $[z]$ is $0$ as an element of $H_6(\P)$. By Poincar\'e duality, $[z] = 0$ if and only if $\operatorname{PD}([z])\operatorname{PD}([y]) = 0$ for all $[y] \in H_4(\P)$, where $\operatorname{PD} : H_i(\P) \to H^{10-i}(\P)$ is the Poincar\'e duality map. Since the cohomology ring of a smooth complete toric variety is generated by divisor classes, it is enough to show that
    \[
    \operatorname{PD}([z])\cdot [D_{\rho_1}]\cdot [D_{\rho_2}]\cdot [D_{\rho_3}]=0
    \]
    for all \(\rho_1,\rho_2,\rho_3\in\Pi(1)\). We will in fact show the stronger statement
    $$\operatorname{PD}([z])\cdot [D_\rho]=0$$
    for every torus-invariant prime divisor $D_\rho \subset \P$.
    
    Recall that $\operatorname{supp} z\subset U_1\cup U_2$. If $\rho\notin \sigma_1(1)\cup\sigma_2(1),$ then 
    $D_\rho\cap U_1=\emptyset, D_\rho\cap U_2=\emptyset.$ Hence $D_\rho\cap \operatorname{supp} z=\emptyset,$ and therefore $\operatorname{PD}([z])\cdot [D_\rho]=0$. It remains to consider the divisors corresponding to rays of $\sigma_1$ and $\sigma_2$. Let $D_{x_i}$ denote the torus-invariant divisor whose equation on $U_1$ is $x_i=0$, and let $D_{y_i}$ denote the torus-invariant divisor whose equation on $U_2$ is $y_i=0$. 

    For $i=2,\ldots,5$, we have
    $$D_{x_i}\cap \operatorname{supp} z=\emptyset.$$
    Indeed, $D_{x_i}\cap U_2=\emptyset$, since the corresponding ray belongs to $\sigma_1(1)$ and not to $\sigma_2(1)$, while on $U_1$ the support of $z_1$ satisfies $|x_i|=1.$ Thus $x_i\neq 0$ on $\operatorname{supp} z_1$. Hence
    $$ \operatorname{PD}([z])\cdot [D_{x_i}]=0 \qquad \text{for } i=2,\ldots,5.$$
    A similar argument applies to $D_{y_i}$ for $i=2,\ldots,5$.

    Therefore, the only remaining divisors are $D_{x_1}$ and $D_{y_1}$. We first treat $D_{x_1}$. We have $D_{x_1} \cap U_2 = 0$, hence $z \cap D_{x_1} = z_1 \cap D_{x_1}$. In the coordinates on $U_1$, this intersection is
    $$z_1\cap D_{x_1}=\{x_1=0,\ |x_2|=|x_3|=|x_4|=|x_5|=1\}\subset U_1.$$
    The intersection is transverse, because $x_1$ is the disk coordinate in the definition of $z_1$. Hence
    $$\operatorname{PD}([z])\cdot [D_{x_1}] =\operatorname{PD}([z_1\cap D_{x_1}]).$$
    But $z_1\cap D_{x_1}$ is a positive-dimensional cycle supported in $U_1\cong \mathbb C^5$. Since 
    $$H_k(U_1)=0 \qquad \text{for } k>0, $$
    this cycle is null-homologous in $U_1$, hence also null-homologous in $\P$. Therefore $\operatorname{PD}([z])\cdot [D_{x_1}]=0$. The same argument applies to $D_{y_1}$. 

    We have shown that $$\operatorname{PD}([z])\cdot [D_\rho]= \emptyset $$for every torus-invariant prime divisor $D_\rho$. Consequently, $$\operatorname{PD}([z])\cdot [D_{\rho_1}]\cdot [D_{\rho_2}]\cdot [D_{\rho_3}]=0$$
    for all $\rho_1,\rho_2,\rho_3\in\Pi(1)$. Since divisor classes generate $H^*(\P)$, it follows that
    $$\operatorname{PD}([z])=0.$$
    Hence $[z]=0$ as a class in $H_6(\P)$.
\end{proof}

\begin{corollary}\label{cycle-lemma-family}
    For any open set $W \subset U^{an} \cap \{|\psi|>1\}$, there exists a section $\gamma \in \Gamma(W,(R^3 f_* \C)^{\vee})$ such that, $\tube_{1,\psi} \circ \tube_{2,\psi} (\gamma(\psi)) = (S^1)^5$ for each $\psi \in W$.
\end{corollary}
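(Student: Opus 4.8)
The plan is to observe that the cycle produced in Lemma~\ref{cycle-lemma} is already locally constant in $\psi$, so that it assembles into a flat section over any $W$ as in the statement. First I would record the relevant local systems. Over $U$ the morphism $f$ is smooth and projective, so $R^3 f_\ast \C$ is a local system on $U$, and its dual $(R^3 f_\ast \C)^\vee$ is a local system with fibre $H_3(\Y_\psi)$ at $\psi$; a section over $W$ is precisely a flat family $\psi \mapsto \gamma(\psi) \in H_3(\Y_\psi)$. Since $(S^1)^5$ is disjoint from $Y_{1,\psi}\cup Y_{2,\psi}$ whenever $|\psi|>1$, as noted before Lemma~\ref{tube-lemma}, the assignment $\psi \mapsto [(S^1)^5]$ is a flat section of the local system with fibre $H_5(\P_\Pi - (Y_{1,\psi}\cup Y_{2,\psi}))$ over $\{|\psi|>1\}$. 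Using that the tube and residue maps upgrade to morphisms of local systems over $U\cap\{|\psi|>1\}$ (the remark following \eqref{eq:tube1}), together with the Mayer--Vietoris map $\delta$, the asserted equation makes sense at the level of sections, and it suffices to produce $\gamma$ as a flat section.

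Next I would note that the cycle built in the proof of Lemma~\ref{cycle-lemma} is manifestly independent of $\psi$: the maximal cones $\sigma_1,\sigma_2\in\Pi$, the affine charts $U_1 = U_{\sigma_1}$ and $U_2 = U_{\sigma_2}$, and the chains
$$z_1 = \{\,|x_1|\le 1,\ |x_2| = \cdots = |x_5| = 1\,\} \subset U_1, \qquad z_2 = \{\,|y_1|\le 1,\ |y_2| = \cdots = |y_5| = 1\,\} \subset U_2$$
are all fixed subsets of $\P_\Pi$; only the subvariety $\Y_\psi$ being cut out varies with $\psi$. The sole place where $\psi$ enters the proof is the elementary estimate showing $\supp z_i \subset U_i - Y_{i,\psi}$, and that estimate is uniform on $\{|\psi|>1\}$. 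Consequently $z := z_1 + z_2$ is a $6$-cycle in $\P_\Pi - \Y_\psi$ for every $\psi\in W$; as a fixed chain disjoint from the total space $f^{-1}(W)$ it defines a flat section $\psi\mapsto [z(\psi)]$ of the local system with fibre $H_6(\P_\Pi - \Y_\psi)$ over $W$, and $\delta z(\psi) = (S^1)^5$ holds as an identity of sections by the definition of the Mayer--Vietoris map, exactly as in Lemma~\ref{cycle-lemma}.

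To finish I would run the argument of Lemma~\ref{topology-lemma} in families. The long exact sequence \eqref{pair}, taken fibrewise over $W$, is an exact sequence of local systems, and under the identification $H_7(\P_\Pi,\P_\Pi - \Y_\psi)\cong H_3(\Y_\psi)$ its relevant map is the tube map $\tube_\psi$; the latter is injective because the preceding term $H_7(\P_\Pi)$ vanishes, a smooth projective toric variety having no odd homology. Hence $\tube_\psi$ embeds $(R^3 f_\ast\C)^\vee|_W$ as the subsystem $\ker\bigl(H_6(\P_\Pi - \Y_\bullet) \to H_6(\P_\Pi)\bigr)$. By Lemma~\ref{cycle-lemma} the class $[z(\psi)]$ vanishes in the constant local system $H_6(\P_\Pi)$ for each $\psi\in W$, hence identically; therefore $z$ lifts uniquely to a section $\gamma\in\Gamma(W,(R^3 f_\ast\C)^\vee)$ with $\tube_\psi(\gamma(\psi)) = z(\psi)$, and flatness of $\gamma$ follows from flatness of $z$ together with injectivity of the flat map $\tube_\psi$. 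Finally the family version of Lemma~\ref{tube-lemma} gives $\delta\circ\tube_\psi = \tube_{1,\psi}\circ\tube_{2,\psi}$, so combining with $\delta z(\psi) = (S^1)^5$ yields $\tube_{1,\psi}\circ\tube_{2,\psi}(\gamma(\psi)) = (S^1)^5$ for all $\psi\in W$; fibrewise $\gamma(\psi)$ is exactly the class of Lemma~\ref{cycle-lemma}.

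I expect the only genuine subtlety to be the globalisation step just used, namely that the explicit chains $z_1 + z_2$, which are literally the same subsets of the fixed ambient $\P_\Pi$, determine an honest flat section of the local system $H_6(\P_\Pi - \Y_\bullet)$ over $W$ rather than merely a pointwise collection of classes. This is a local-triviality statement for the fibration $(\P_\Pi\times W)\setminus f^{-1}(W) \to W$, and it should follow because $\supp(z_1 + z_2)$ is disjoint from $f^{-1}(W)$ uniformly in $\psi$: one may choose a smooth family of ambient isotopies trivialising this fibration over $W$ that fixes $\supp(z_1 + z_2)$ pointwise. Everything else is then a formal consequence of naturality of the tube and residue maps and of the exact sequences employed, reduced to the fibrewise statements already established in Lemmas~\ref{tube-lemma}--\ref{cycle-lemma}.
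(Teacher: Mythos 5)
Your proof is correct and is essentially the argument the paper takes for granted: the corollary is stated without proof immediately after Lemma~\ref{cycle-lemma}, so the intended justification is exactly the observation you make, namely that the chains $z_1,z_2$ constructed there are fixed subsets of $\P_\Pi$, uniformly disjoint from $\Y_\psi$ on $\{|\psi|>1\}$, and therefore define a flat section of the local system with fibre $H_6(\P_\Pi-\Y_\psi)$; combined with naturality of the tube maps this gives the flat family $\gamma(\psi)$. You also supply a detail the paper's Lemma~\ref{topology-lemma} does not record but which the family version genuinely needs: that $\tube_\psi$ is \emph{injective}, because $H_7(\P_\Pi)=0$ for a smooth projective toric variety, so that the flat section of $H_6(\P_\Pi-\Y_\bullet)$ lifts \emph{uniquely} (hence flatly) through the embedded sub-local-system $(R^3f_\ast\C)^\vee$ without encountering an obstruction in $H^1(W,\ker\tube_\bullet)$. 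This is a useful strengthening rather than a departure. (As a side remark, the $R^1$ in the corollary's statement is a typo for $R^3$; your reading is the intended one.)
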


We consider the period along the constructed cycle:
   \begin{equation}\label{eq:phi0}
       \Phi_0 := \int_{\gamma} \omega = \frac{1}{(2 \pi i)^5}\int_{(S^1)^5} \omega^{\circ} = \sum_{n \geq 0} \frac{((3n)!)^2}{(n!)^6} (3 \psi)^{-6n}.
   \end{equation}
The period appears in \cite{BvS}. The first equality follows from the definition of $\omega$, given in \eqref{eq:etacirc-def}. The class $\omega \in H^3(\Y_{\psi})$ is a (double) residue, and the residue is a map dual to the (double) tube map --- hence the first equality in \eqref{eq:phi0}. One can use the geometric expansion
$$\frac{1}{1 - \frac{t_1}{3 \psi}  - \frac{t_2}{3 \psi}  - \frac{t_3}{3 \psi}} \frac{1}{1 - \frac{t_4}{3 \psi}  - \frac{t_5}{3 \psi}  - \frac{t_6}{3 \psi}} = \sum_{k, l \geq 0} (3 \psi)^{-k - l} (t_1 + t_2 + t_3)^k (t_4 + t_5 + t_6)^l.$$
Only the terms $(t_1 ... t_6)^{s} = 1$ (with coefficient $(3 \psi)^{-6s}$) will survive integrating via the Cauchy formula. A combinatorial counting argument shows that the expression $$(t_1 + t_2 + t_3)^{3s} (t_4 + t_5 + t_6)^{3s}$$ 
includes a term 
$$\frac{(3s)! (3s)!}{(s!)^6} (t_1 ... t_6)^s$$ 
in its monomial expansion. This is a good time to introduce the new coordinate 
$$z = (3 \psi)^{-6},$$
since we can see from \eqref{eq:phi0}, that our period depends only on the variable $z$. In the $z$-coordinate, the smooth fiber locus is $V =\P^1 - \{0, \infty, 3^{-6}\}$.

\subsection{Irreducibility of the Picard--Fuchs equation}
Let $U = \P^1 - (\{0,\infty\} \cup \mu_6)$ be the locus of smooth fibers, and let $\mathcal E := \mathcal O_U \otimes_{\C} R^3 f_* \C$ be the algebraic coherent free $\mathcal O_U$-module equipped with an algebraic flat Gauss--Manin connection $\nabla : \mathcal E \to \Omega^1_U \otimes_{\mathcal O_U} \mathcal E$. Let $\mathcal E(\psi)$ be the fiber of $\mathcal E$ over the point $\psi$; sometimes $H^3(\Y_{\psi})$ will be used instead, since the two vector spaces are canonically isomorphic.

We will need the Hodge numbers of $\Y_{\psi}$ for $\psi \in U$. Since the family $\Y_{\psi}$ was constructed as a (crepant desingularization of a) Batyrev--Borisov mirror dual to the intersection of two cubics in $\P^5$. We can use the computations of the Hodge numbers for the intersection of two general cubics in $\P^5$ and \cite[Theorem 4.15]{StringyBB}, in order to obtain the Hodge numbers for $\Y_{\psi}$. The precise statement: we have $h^{p,q}(\Y_{\psi}) = 1$ for every $\psi \in U$ and every $p+q = 3$.

Let $T = T_{\psi_0} : \mathcal E (\psi_0) \to \mathcal E(\psi_0)$ be the local monodromy operator near the point $\infty$, where $\psi_0 \in D_{\infty} - \{\infty\} = \{\psi \in U : |\psi|  > 1\}$ is a fixed distinguished point. To prove that $\infty$ is a point of maximal unipotent monodromy, means that $T$ is unipotent and $\log T$ is a nilpotent operator of the maximal possible degree. In other words, the Jordan normal form of the operator $T$ consists of a single Jordan block with the eigenvalue $1$. Following \cite{Mor}, we will extract information about the operator $T$ from the Picard--Fuchs equation, that we will introduce shortly. 

Consider the following differential operator:
    $$L = D^4 - 3^6 z \left(D+\frac{1}{3}\right)^2\left(D+\frac{2}{3}\right)^2, \quad D = z \frac{d}{dz},$$
or, equivalently, in terms of the coordinate $\psi$,
    $$L = \frac{1}{6^4}\left(Q^4 - \psi^{-6} \left(Q - 2 \right)^2\left(Q - 4\right)^2\right), \quad Q = \psi \frac{d}{d \psi}.$$

The operator has degree 4. In Theorem \ref{th:v-basis}, we will show that it is the minimal-degree operator that annihilates the form $\omega$ --- a property that defines the Picard--Fuchs operator. The associated differential equation $L [\phi] = 0$ is known as the Picard--Fuchs equation. It can be verified directly using the Taylor series  expansion \eqref{eq:phi0} that 
\begin{equation} \label{eq:Lphi0=0}
L[\Phi_0] = 0, 
\end{equation}  
where $\Phi_0$ is the holomorphic period defined in \eqref{eq:phi0}.

Depending on the context, it may be more natural to express the operator in terms of the $z$-coordinate or the $\psi$-coordinate, which is why both expressions are provided. We will show that $L$ annihilates the section $\omega$ and that $L$ can't be decomposed into a product of two differential operators with coefficients in $\C(z)$. Such operators are called irreducible. Thus, $L$ is the irreducible differential operator that annihilates $\omega$. 

Let 
$$A = \C(z) \left[z \frac{d}{dz}\right]$$
be the noncommutative ring of polynomial differential operators with coefficients in $\C(z)$. The ring $A$ is a left Euclidean ring. Recall that a differential operator $L$ in $A$ is irreducible if it cannot be written as a product $P Q$, where $P, Q$ are non-constant polynomial differential operators with coefficients in $\C(z)$.  \cite[Proposition 2.11.9(2), $\alpha = 3^{-6}$]{Katz} states that the operator $L$ is irreducible as an element of $A$. We combine Katz's irreducibility result with the Yukawa coupling computation of \cite[Section 5.6.1]{BvS} to prove the following.`
\begin{theorem} \label{th:v-basis}
   The equation $L \omega = 0$ holds. The classes 
   $$D^i \omega(z), \qquad i = 0, \ldots, 3 $$ 
   form a basis of $\mathcal E (z)$ for each $z \in V$. 
\end{theorem}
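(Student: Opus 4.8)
The plan is to derive the theorem from four facts already available in the text: the direct verification that $L$ annihilates the holomorphic period $\Phi_0$ from \eqref{eq:Lphi0=0}; the irreducibility of $L$ as an element of $A$ (Proposition \ref{th:picard-fuchs-irreducability}); the flat family of cycles $\gamma(\psi)$ with $\tube_{1,\psi}\circ\tube_{2,\psi}(\gamma(\psi))=(S^1)^5$ (Corollary \ref{cycle-lemma-family}); and the fact that $\mathcal E$ has rank $4$ (Proposition \ref{th:hodge-numbers}). The underlying mechanism is that pairing against the flat cycle $\gamma$ converts statements about the section $\omega$ of $\mathcal E$ into statements about the scalar period $\Phi_0$, and conversely.

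To prove $L\omega=0$ I would first record that, since $\gamma$ is a flat section of the dual local system and $\nabla$ is the Gauss--Manin connection, one has $D^i\Phi_0=\int_\gamma D^i\omega$ for all $i$, and more generally $P[\Phi_0]=\int_\gamma P[\omega]$ for any $P\in\C[z][D]$, wherever $\gamma$ is defined. I would then pass to the monic normalization $L'=(1-3^6z)^{-1}L\in A$, which is again irreducible and still annihilates $\Phi_0$. Since $A$ is left Euclidean, $\operatorname{Ann}(\Phi_0)$ is a principal left ideal $AM$ with $M$ monic; it is nonzero (it contains $L'$) and $M$ is not a unit (otherwise $\Phi_0=0$, contradicting $\Phi_0=1+\cdots$), so from $L'=NM$ and the irreducibility of $L'$ the factor $N$ is a unit, and comparing leading coefficients gives $M=L'$. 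Hence $\Phi_0$ is killed by no nonzero operator of order $\le 3$, i.e. $\Phi_0,D\Phi_0,D^2\Phi_0,D^3\Phi_0$ are $\C(z)$-linearly independent; pairing any relation $\sum_{i=0}^3 c_i(z)D^i\omega=0$ against $\gamma$ transports this independence to $\omega,D\omega,D^2\omega,D^3\omega$. As $\mathcal E\otimes_{\mathcal O_U}\C(z)$ has dimension $4$, these four classes form a $\C(z)$-basis, so $D^4\omega=\sum_{i=0}^3 b_i D^i\omega$ for unique $b_i\in\C(z)$; the monic operator $\tilde L:=D^4-\sum_i b_iD^i$ then annihilates $\omega$, hence also $\Phi_0$ after applying $\int_\gamma$. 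Since any element of $AL'=\operatorname{Ann}(\Phi_0)$ has the form $QL'$ with $\operatorname{ord}Q\ge 0$, the only monic order-$4$ element of $\operatorname{Ann}(\Phi_0)$ is $L'$ itself, so $\tilde L=L'$, and therefore $L\omega=(1-3^6z)\tilde L\omega=0$.

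For the pointwise basis statement I would argue via a Wronskian. Fixing $z_0\in V$ (the smooth-fibre locus in the $z$-line, i.e. $\P^1$ with $\{0,3^{-6},\infty\}$ removed) and a flat $\C$-frame $\sigma_1,\dots,\sigma_4$ of $\mathcal E$ on a simply connected neighbourhood, write $\omega=\sum_j g_j\sigma_j$; flatness of the $\sigma_j$ gives $D^i\omega=\sum_j(D^ig_j)\sigma_j$, so the $4\times 4$ matrix expressing $\omega,D\omega,D^2\omega,D^3\omega$ in the frame $(\sigma_j)$ is $(D^ig_j)_{0\le i\le 3}$, whose determinant is the ($D$-)Wronskian $\mathcal W$ of $g_1,\dots,g_4$. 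Each $g_j$ solves $L'[g]=0$ by the first part, and the coefficients of $L'$ are regular on $V$ — its singular points are precisely $z\in\{0,3^{-6},\infty\}$, which have been removed — so Abel's identity shows that $\mathcal W$ satisfies a first-order linear ODE with coefficient regular on the connected set $V$, whence $\mathcal W$ vanishes either identically or nowhere on $V$. The $\C(z)$-independence established above forces $\mathcal W\not\equiv 0$, so $\mathcal W$ is nowhere zero on $V$; this is exactly the claim that $D^0\omega(z),\dots,D^3\omega(z)$ is a basis of $\mathcal E(z)$ for every $z\in V$.

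I expect the main obstacle to be the careful bookkeeping in the first half: correctly matching the operator acting on the scalar period $\Phi_0$ with the same operator acting on the section $\omega$ through the Gauss--Manin pairing with $\gamma$, and running the entire minimality argument with the monic normalization $L'$ rather than with $L$. Once the singular locus of $L$ has been identified as $\{0,3^{-6},\infty\}$, the Wronskian argument in the second half is routine.
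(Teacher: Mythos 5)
Your proof is correct, and it is a genuinely different route for the second half of the theorem. For the first claim, $L\omega=0$, you and the paper use the same two ingredients --- the flat cycle $\gamma$ giving $\Phi_0=\int_\gamma\omega$ and the irreducibility of $L$ in $A$ --- but organize them differently: the paper takes a minimal monic annihilator $P$ of $\omega$ and runs Euclidean division of $L$ by $P$ to force $L=qP$ with $q\in\C(z)$, whereas you identify $\operatorname{Ann}(\Phi_0)$ as the principal left ideal $AL'$, transfer the resulting $\C(z)$-independence of $\Phi_0,\dots,D^3\Phi_0$ to $\omega,\dots,D^3\omega$ through $\int_\gamma$, and then use the rank-$4$ count from Proposition \ref{th:hodge-numbers} to produce a monic order-$4$ annihilator of $\omega$ which must be $L'$. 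These are substantively equivalent, though your phrasing through the annihilator ideal is tighter and explicitly tracks the monic normalization $L'=(1-3^6z)^{-1}L$, which the paper handles only implicitly. For the second claim, the pointwise basis on $V$, your argument genuinely departs from the paper's: the paper cites the Yukawa coupling $Y(z)=\int_{\Y_z}\omega\wedge D^3\omega=C/(1-3^6z)$ from \cite{BvS} and uses Griffiths transversality together with $\operatorname{rank} F^k\mathcal E/F^{k+1}\mathcal E=1$, while you run an Abel-identity computation on the $D$-Wronskian, $D\mathcal W=-c_3\mathcal W$ with $c_3=-2\cdot 3^6z/(1-3^6z)$ regular on the connected set $V$, combined with the already-established $\mathcal W\not\equiv 0$, to conclude that $\mathcal W$ is nowhere zero. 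Your version eliminates the dependence on the external Yukawa computation of \cite{BvS} and is more elementary and self-contained; the paper's route, by contrast, produces the normalized Yukawa coupling as a byproduct, which is a quantity of independent interest in the mirror symmetry program.
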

\begin{proof}
    By a computation of the Hodge numbers, there exists a $\C(z)$-linear dependence  of the form
    $$a_0 \omega + a_1 D \omega + ... + a_4  D^4 \omega = 0,$$ 
    for some $a_i \in \C(z)$. Hence, there exists an operator 
    $$P = D^m + a_{m-1} D^{m-1} + \ldots + a_0 $$ 
    satisfying $P (\omega) = 0$. Furthermore, we assume that $P$ has minimal possible order among all nonzero differential operators with coefficients in $\C(z)$ that annihilate $\omega$, and that $P$ is monic --- that is, leading coefficient is equal to $1$. Note that $m =\deg P \leq \deg L = 4$. 
  
   Integrating the equation $P(\omega) = 0$ along the cycle $\gamma$, we obtain 
   $$P \left(\int_{\gamma} \omega \right) = 0,$$  
   hence, $P(\Phi_0) = 0$. The period $\Phi_0$, defined in \eqref{eq:phi0}, is holomorphic when $|z| < 3^{-6}$. By the definition of $P$, we find the identity $P(\Phi_0) = 0$. So, we have both $P (\Phi_0) = 0$ and $L (\Phi_0) = 0$ when $z < 3^{-6}$.
    
    We apply the Euclidean algorithm in the left Euclidean ring $A$ to show that $L = q P$ for some $q \in \C(z)$. We write $L = q P + r$ for some $q,r \in A$ with $\deg r < \deg P \leq 4$.  Here, $\deg r$ (or $\deg P$) refers to the order of $r$ (or $P$) as a differential operator. Since $L(\Phi_0)=0$ and $P(\Phi_0)=0$, it follows that $r(\Phi_0)=0$. 
    
    If $r = 0$, then $\deg q = 0$, since $\deg q \neq 0$ would contradict the irreducability of $L$ in $A$, in that case, we have $L = q P$ for some $q \in \C(z)$, which is precisely what we want. 
    
    If $r \neq 0$, we can repeat the argument with the pair $(L, r)$ instead of $(L,P)$, constructing a new decomposition $L = q_1 r + r_1$. We repeat the process until the remainder $r_i$ is $0$. After at most $4$ steps, we conclude that $L = q_{*} r_*$ for some $q_*,r_* \in A$ with $\deg q_* + \deg r_* = 4$. We can rule out the case $\deg r_* = 0$, since it would imply the impossible equality $r_* \Phi_0 = 0$. Thus $1 \leq \deg r_* \leq 3$, which contradicts the irreducability of $L$, as established in \cite[Proposition 2.11.9(2), $\alpha = 3^{-6}$]{Katz}.

    To show that $\omega(z), \ldots, D^3 \omega(z)$ form a basis of $\mathcal E(z)$ for $z \in V$, we can consider a Yukawa coupling. In \cite[Section 5.6.1]{BvS}, it was shown that
    $$Y(z) = \int_{\Y_{z}} \omega \wedge D^3 \omega, \qquad DY = - \frac{1}{2}\left( - \frac{2 \cdot 3^6 z}{1 - 3^6z}\right) Y, \qquad Y = C \frac{1}{1 - 3^6 z},$$
    for some nonzero constant $C$. 

    This implies that the class of $D^3\omega(z)$ in $\F^0\mathcal E(z)/\F^1\mathcal E(z)$ is nonzero. Indeed, since $\omega(z)\in \F^3\mathcal E(z)$ and the pairing with $\omega(z)$ vanishes on $\F^1\mathcal E(z)$. Thus the functional
    $$\alpha\mapsto \int_{\Y_z}\omega(z)\wedge \alpha$$
    factors through the quotient $\F^0\mathcal E(z)/\F^1\mathcal E(z)$. Since
    $$\int_{\Y_z}\omega(z)\wedge D^3\omega(z)=Y(z)\neq 0,$$
    the image of $D^3\omega(z)$ in $\F^0\mathcal E(z)/\F^1\mathcal E(z)$ is nonzero.
    
    The statement that $\omega(z), ..., D^3 \omega(z)$ form a basis of $\mathcal E(z), z \in V$ is equivalent to the condition that $0 \neq [D^3 \omega(z_0)]$ in the quotient $\F^0 \mathcal E(z_0)/\F^1 \mathcal E (z_0)$, where $\F^k \mathcal E$ denotes the Hodge filtration on $\mathcal E$ and $[D^3 \omega]$ represents the corresponding quotient class. Therefore each of the vectors
    $$\omega(z),\quad D\omega(z),\quad D^2\omega(z),\quad D^3\omega(z)$$
    has a nonzero image in the corresponding one-dimensional graded piece
    $$
    \F^3\mathcal E(z),\quad
    \F^2\mathcal E(z)/\F^3\mathcal E(z),\quad
    \F^1\mathcal E(z)/\F^2\mathcal E(z),\quad
    \F^0\mathcal E(z)/\F^1\mathcal E(z).
    $$
    Hence these four vectors are linearly independent. Since $\dim \mathcal E(z)=4$, they form a basis of $\mathcal E(z)$ for every $z\in V$.
\end{proof}

The following corollary shows that the statements corresponding to Theorem \ref{th:v-basis} hold over $U$, i.e., in the variable $\psi$.
\begin{corollary}\label{th:u-basis}
The global section $\omega \in \Gamma(U, \mathcal E)$ satisfies the equation $L \omega = 0$. The elements $D^i \omega(\psi),i =0, \ldots, 3$, form a basis of $\mathcal E(\psi)$ for each $\psi \in U$. Hence, $L$ is the Picard--Fuchs operator associated with $\omega$. 
\end{corollary}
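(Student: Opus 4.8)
The plan is to transport Theorem~\ref{th:v-basis}, which is stated in the coordinate $z$, to the coordinate $\psi$ by viewing the substitution $z = (3\psi)^{-6}$ as a finite étale covering $p\colon U\to V$, $p(\psi) = (3\psi)^{-6}$. First I would record that $p$ is finite étale onto its image---all of $V$ except the point $z=\infty$, which would come from $\psi=0\notin U$---of degree $6$: a value $z\notin\{0,3^{-6},\infty\}$ has the six preimages with $\psi^6 = 3^{-6}z^{-1}$, none of them $0$, $\infty$ or a sixth root of unity, and the derivative $-18(3\psi)^{-7}$ of $p$ never vanishes on $U$. I would then establish three compatibilities with $p$: (i) by Proposition~\ref{th:parameter-elimination}, the family $f\colon\Y\to\P^1$ restricted to $U$ is---through the fibrewise toric automorphism $t_i\mapsto t_i/(3\psi)$---the pullback along $p$ of the one-parameter family $\Y_z$, so $\mathcal E|_U$ is the $p$-pullback of the corresponding $\mathcal O$-module with Gauss--Manin connection for $\Y_z$; (ii) substituting $t_i\mapsto t_i/(3\psi)$ in \eqref{eq:etacirc-def}---using that $\tfrac{dt_1}{t_1}\wedge\cdots\wedge\tfrac{dt_5}{t_5}$ is scale-invariant and that the prefactor $(3\psi)^2$ exactly cancels the $(3\psi)^2$ produced by $h_1 h_2$---identifies $\omega$ with the $p$-pullback of the analogous section for $\Y_z$; (iii) since $z = (3\psi)^{-6}$ gives the identity $z\tfrac{d}{dz} = -\tfrac16\psi\tfrac{d}{d\psi}$, the operator $D$ is unchanged under $p$ and the operator $L$ of \eqref{eq:picard-fuchs-z} becomes the one of \eqref{eq:picard-fuchs-psi}.

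Granting (i)--(iii), the first two assertions are immediate: pulling $L\omega = 0$ (Theorem~\ref{th:v-basis}) back along $p$ gives $L\omega = 0$ over $U$, and since $p$ is étale the induced isomorphisms $\mathcal E(\psi)\xrightarrow{\sim}\mathcal E(p(\psi))$ carry each $D^i\omega(\psi)$ to $D^i\omega(p(\psi))$; the latter form a basis of $\mathcal E(z)$ for every $z\in V$ by Theorem~\ref{th:v-basis}, so $D^i\omega(\psi)$, $i=0,\dots,3$, is a basis of $\mathcal E(\psi)$ for every $\psi\in U$.

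For the last assertion I would verify that $L$ is of minimal order among the operators in $\C(\psi)\big[\psi\tfrac{d}{d\psi}\big]$ that annihilate $\omega$---a priori stronger than the minimality over $\C(z)$ already contained in the proof of Theorem~\ref{th:v-basis}, since $\C(z)\subsetneq\C(\psi)$. If $P = \sum_{i=0}^{k} b_i D^i$ with $b_i\in\C(\psi)$, $b_k\not\equiv 0$, $k\le 3$, and $P\omega = 0$, I would pick $\psi_0\in U$ at which all $b_i$ are regular and $b_k(\psi_0)\ne 0$; then $\sum_{i=0}^{k} b_i(\psi_0)\,D^i\omega(\psi_0) = 0$ is a nontrivial relation among members of the basis of $\mathcal E(\psi_0)$ found above, a contradiction. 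Hence $L$, which is monic of order $4$ and annihilates $\omega$, is the Picard--Fuchs operator of $\omega$.

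The hard part is compatibility (i)--(ii): making it precise that the computations of Section~\ref{sec-inf} carried out in the (a priori multivalued) variable $z$ genuinely descend to the honest algebraic family $\Y_z$ over $V$ whose $p$-pullback is $\Y|_U$, so that the equalities of Theorem~\ref{th:v-basis} are honest identities of sheaves and sections one may pull back along $p$. Once this bookkeeping is set up, everything else is a formal consequence of Theorem~\ref{th:v-basis}, Proposition~\ref{th:parameter-elimination}, and the étaleness of $p$.
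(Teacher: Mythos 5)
Your proposal follows the same route as the paper's proof: transport Theorem~\ref{th:v-basis} along the change of coordinate $z = (3\psi)^{-6}$. The paper's own argument is a two-line version of your plan (it just substitutes $z = (3\psi)^{-6}$ and observes that every $z\in V$ arises this way), whereas you make explicit the étale cover $p\colon U\to V$, the identification of $\mathcal E|_U$ and $\omega$ with pullbacks along $p$, and--crucially--you supply the minimality check over $\C(\psi)\supsetneq\C(z)$ via the pointwise-basis argument, a step the paper's proof leaves implicit even though it is needed before one can call $L$ the Picard--Fuchs operator in the $\psi$-variable.
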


Using the the Picard--Fuchs equation and \cite[Section 1, Corollary]{Mor}, we can deduce.

\begin{proposition}\label{th:infinity-is-mum}
The variation of Hodge structures associated with the local system $R^3 f_* \C$ has maximally unipotent local monodromy near $\psi = \infty$. 
\end{proposition}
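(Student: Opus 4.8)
The plan is to extract the local monodromy of $R^3 f_*\C$ near $\infty$ directly from the Picard--Fuchs operator $L$, following \cite{Mor}. First I would reduce the statement to a question about the scalar equation $L\phi = 0$. By Corollary \ref{th:u-basis} the classes $\omega(\psi), D\omega(\psi), D^2\omega(\psi), D^3\omega(\psi)$ form a basis of $\mathcal E(\psi)$ for every $\psi \in U$, so $\omega$ is a cyclic vector for the Gauss--Manin connection $(\mathcal E,\nabla)$ with associated cyclic operator $L$, and hence $(\mathcal E,\nabla)\cong \mathcal D_U/\mathcal D_U L$. Concretely, the period map $\gamma\mapsto \int_{\gamma}\omega$ sends a local section $\gamma$ of $(R^3 f_*\C)^{\vee}$ to a solution of $L$, since $L\omega=0$ in $\mathcal E$ implies $L\big(\int_\gamma\omega\big)=0$; this map is injective, because a cycle pairing trivially with $\omega,D\omega,D^2\omega,D^3\omega$ vanishes in $H_3(\Y_\psi)$, and both local systems have rank $4$ (the target by the order of $L$, the source by Proposition \ref{th:hodge-numbers}). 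Thus, on a punctured neighbourhood of $\infty$, $(R^3 f_*\C)^{\vee}$ is isomorphic to the local system $\operatorname{Sol}(L)$ of solutions of $L$. Since being a single unipotent Jordan block is preserved under taking the inverse-transpose, it suffices to show that the monodromy of $\operatorname{Sol}(L)$ around $\infty$ is a single unipotent Jordan block of size $4$.

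Next I would compute the indicial equation of $L$ at $\infty$. In the local coordinate $w=1/\psi$ one has $Q=\psi\frac{d}{d\psi}=-w\frac{d}{dw}$ and $\psi^{-6}=w^{6}$, so from \eqref{eq:picard-fuchs-psi},
$$6^{4}\,L\,w^{s}=s^{4}w^{s}-(s+2)^{2}(s+4)^{2}w^{s+6}.$$
Hence $w=0$ is a regular singular point with indicial polynomial $s^{4}$: the unique local exponent is $0$, with multiplicity equal to $\operatorname{ord}L=4$. In particular all exponents are integers, so the monodromy near $\infty$ is unipotent; what remains is to see that its single Jordan block is as large as possible.

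For that I would run the Frobenius construction. With $P(s)=s^{4}$ one has $P(s+m)=(s+m)^{4}\ne 0$ for every $m\ge 1$ and every $s$ close to $0$, so the Frobenius recursion determines a unique series $\Phi(w,s)=w^{s}\sum_{m\ge 0}c_{m}(s)w^{m}$ with $c_{0}\equiv 1$, holomorphic in $s$ near $0$ and satisfying $6^{4}L\,\Phi(w,s)=P(s)w^{s}$. Differentiating in $s$ and setting $s=0$ produces solutions $\phi_{k}(w):=\partial_{s}^{k}\Phi(w,0)$ for $k=0,1,2,3$, of the shape $\phi_{k}=(\log w)^{k}(1+O(w))+(\text{terms of lower degree in }\log w)$; these are linearly independent and hence a basis of $\operatorname{Sol}(L)$ near $\infty$. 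Under the monodromy $\log w\mapsto \log w+2\pi i$ the leading term of $\phi_{k}$ becomes $(\log w+2\pi i)^{k}(1+O(w))$, so the monodromy operator $T$ satisfies $(T-\mathrm{Id})\phi_{k}=2\pi i\,k\,\phi_{k-1}+(\text{combination of }\phi_{0},\dots,\phi_{k-2})$; consequently $(T-\mathrm{Id})^{3}\phi_{3}\ne 0$ while $(T-\mathrm{Id})^{4}=0$, i.e. $T$ is a single unipotent Jordan block of size $4$. Together with the reduction in the first paragraph, this gives the proposition.

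I do not expect a serious obstacle: the result is essentially a formal consequence of having the Picard--Fuchs operator together with Corollary \ref{th:u-basis}. The step that needs the most care is the reduction in the first paragraph, namely making precise that the Gauss--Manin monodromy of $R^3 f_*\C$ near $\infty$ coincides, up to the harmless inverse-transpose, with the monodromy of the scalar ODE $L\phi=0$; this rests entirely on the basis statement of Corollary \ref{th:u-basis}. After that the Frobenius bookkeeping is routine, the only genuine point being the non-vanishing $P(s+m)\ne 0$ for $m\ge 1$ near $s=0$, which is what makes the recursion well defined and forces every solution to have a pole-free leading coefficient.
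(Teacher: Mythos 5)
Your proof is correct and follows essentially the same route as the paper: both reduce the claim to the scalar Picard--Fuchs equation via Corollary \ref{th:u-basis} and then extract the monodromy from the local exponents at $\infty$, which are all $0$ (indicial polynomial $s^4$). The paper does the second step by observing that $L$ degenerates to $D^4$ at $z = 0$ (with $z = (3\psi)^{-6}$) and citing \cite[Section 1, Corollary]{Mor}, whereas you work directly in the coordinate $w = 1/\psi$ and spell out the Frobenius construction of the $\log$-basis; this is a slightly more self-contained rendition of the same argument, and it has the minor advantage of avoiding the implicit passage between the $z$-coordinate and the $\psi$-coordinate (a degree-$6$ cover) that the paper does not comment on.
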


\section{0 is a K-point} \label{sec:k-point}
In this section we investigate the special fiber $\Y_0$ from a Hodge-theoretical perspective. For the purposes of this discussion, we define a $K$-point to be a point where the local monodromy is unipotent and has two Jordan blocks of size two. Points in the parameter space corresponding to fibers with this property are sometimes referred to as $K$-points; see, for example, \cite{DoranHarderThompson}. We denote the Hodge--Deligne numbers of the limiting mixed Hodge structure near a point $a$ by 
\begin{equation}\label{def:hpqlim} h^{p,q} = h^{p,q}(H^3_{\lim,a}) = \dim_{\C} Gr_F^{p} Gr^W_{p+q} 
H^3_{\lim,a}.
\end{equation}
See \cite{Schmid} for a precise definition of limiting mixed Hodge structures. Analogously to the usual Hodge diamond, we consider the Hodge--Deligne diamond of $H^3_{\lim,0}$, with the numbers $h^{p,q}$ defined as in \eqref{def:hpqlim}:
$$
\begin{matrix}
  &   &    & h^{3,3} &    &   &   \\
  &   & h^{3,2}  &   & h^{2,3}  &   &   \\
  &  h^{3,1} &    & h^{2,2} &    & h^{1,3} &   \\
 h^{3,0} &   &  h^{2,1} &   &  h^{1,2} &   &  h^{0,3} \\
  &  h^{2,0} &    &  h^{1,1} &    &  h^{0,2} &   \\
  &   &  h^{1,0} &   &  h^{0,1}  &   &   \\
  &   &    & h^{0,0} &    &   &  
\end{matrix}
$$
The diamond is symmetric with respect to both the vertical and horizontal axes. In this section, we will prove that the Hodge--Deligne diamond of the limiting mixed Hodge structure near the $0$ point is as follows:
$$
\begin{matrix}
  &   &    & 0 &    &   &   \\
  &   & 0  &   & 0  &   &   \\
  & 1 &    & 0 &    & 1 &   \\
0 &   & 0 &   & 0 &   & 0 \\
  & 1 &    & 0 &    & 1 &   \\
  &   & 0  &   & 0  &   &   \\
  &   &    & 0 &    &   &  
\end{matrix}
$$
(see Theorem \ref{RhombusThoerem}). We also remind the reader that the differential operator 
$$L =  \left(-\frac{1}{6} Q\right)^4 - \psi^{-6} \left(-\frac{1}{6} Q + \frac{1}{3}\right)^2\left(-\frac{1}{6} Q+\frac{2}{3}\right)^2,\qquad Q = \psi \frac{d}{d \psi},$$
annihilates the section $\omega(\psi)$ for every $\psi \in U$, where $U = \P^1 - (\{0,\infty\} \cup \mu_6)$ is the smooth fiber locus of the family $\Y_{\psi}$. The section $\omega$ is defined by the formula \eqref{eq:omega-def}. Thus, the operator $\psi^6 Q^4 -  (Q - 2)^2 (Q - 4)^2$ annihilates the section $\omega(\psi), \psi \in U$, and the operator $\psi^6 (Q+2)^4 - Q^2 (Q-2)^2$ annihilates the section $\psi^{-2} \omega(\psi)$, for all $\psi \in U$. Let us define
\begin{equation} \label{eq:r-def}
    R := \frac{\psi^6}{\psi^6- 1} (Q+2)^4 - \frac{1}{\psi^6- 1}Q^2 (Q-2)^2.
\end{equation}
The normalization is chosen in such a way to ensure that the leading coefficient is equal to $1$. The next proposition is the key step in computing the Hodge--Deligne diamonds.

\begin{proposition} \label{th:solutions-Picard--Fuchs} 
    A fundamental system of solutions to the equation 
    $$R[\phi] = 0$$ 
    consists of four linearly independent, multivalued solutions:
    $$a_0, b_0 + 
   \log(\psi) a_0, a_2, b_2 + \log(\psi) a_2$$ 
    where $a_0,a_2,b_0,b_2$ are holomorphic in a neighborhood of $0$.
\end{proposition}
\begin{proof}
    After algebraic expansion of the expression \eqref{eq:r-def}, we obtain
    $$Q^4 + \frac{8\psi^6+4}{\psi^6-1} Q^3 + \frac{24\psi^6-4}{\psi^6-1} Q^2 + \frac{32\psi^6}{\psi^6-1} Q + \frac{16\psi^6}{\psi^6-1} = $$ 
    $$Q^2 (Q-2)^2 - \psi^6 (12 Q^3 + 20Q^2 + 32Q + 16) -  \psi^{12} (12 Q^3 + 20Q^2 + 32Q + 16) + \dots$$
    We will use the Frobenius method. For a detailed exposition of the method, see \cite[Chapter 4, Section 8]{CL}. The indicial equation in this case is $d(\lambda) = \lambda^2(\lambda-2)^2$.  Following the method, we construct a formal solution of the form $\phi_{\lambda} (\psi) = \psi^{\lambda} + c_1 \psi^{\lambda+1} + \dots$ such that $R[\phi_{\lambda}(\psi)] = d(\lambda) \psi^{\lambda}$. Applying the operator $R$ term-by-term to the formal series $\phi_{\lambda}(\psi)$, we obtain the formal series
    $$R[\phi_{\lambda}(\psi)] = \psi^{\lambda} d(\lambda) + \psi^{\lambda+6} (d(\lambda+6) c_6 -  (12\lambda^3 + 20 \lambda^2 + 32 \lambda + 16)) + \dots$$
    The coefficients of the solution can be constructed recursively, so that $$\phi_{\lambda} = \psi^{\lambda} + \frac{(12\lambda^3 + 20 \lambda^2 + 32 \lambda + 16) }{(\lambda+6)^2 (\lambda + 4)^2} \psi^{\lambda+6} + \dots$$
    It is a formal series with coefficients in $\C(\lambda)$ that satisfies $R[\phi_{\lambda}] = d(\lambda) \psi^{\lambda}$. The zeros of the denominators of the coefficients lie in the set 
    $$\{\dots,-18,-16,-12,-10,-6,-4\}.$$ 
    Hence $\phi_0$ and $\phi_2$ are well-defined solutions. Two additional solutions are obtained by differentiating $\phi_{\lambda}$ with respect to $\lambda$ and evaluating at $\lambda = 0$ and $\lambda = 2$. Writing
    $$
    a_0=\phi_0,\quad a_2=\phi_2, \quad
    b_0+ (\log \psi)a_0=\left.\frac{\partial\phi_\lambda}{\partial\lambda}\right|_{\lambda=0}
    ,\quad b_2 + (\log \psi) a_2=\left.\frac{\partial\phi_\lambda}{\partial\lambda}\right|_{\lambda=2}
    $$
    we obtain the claimed form of the solutions. The solutions are linearly independent since their leading terms are, respectively $1, \log \psi, \psi^2, \psi^2 \log \psi$. 
\end{proof}

We define the normalized form
$$\tilde \omega=(3\psi)^{-2}\omega.$$
Period pairing with $\tilde \omega$ defines a morphism of local systems
$$
(R^3 f_* \mathbb C)^\vee\to \ker R,
\qquad
\gamma\mapsto (\gamma,\widetilde\omega).
$$
Here, by $\ker R$ we mean local system of multivalued solutions investigated in Proposition \ref{th:solutions-Picard--Fuchs}. Since
$$
\tilde \omega,\quad
D\tilde \omega,\quad
D^2\tilde \omega,\quad
D^3\tilde \omega
$$
form a basis of $\mathcal E := R^3f_*\mathbb C\otimes_{\mathbb C}\mathcal O_U$, by Corollary \ref{th:u-basis}, this morphism is injective. As both local systems have rank four, it is an isomorphism. We therefore identify $\ker R$ with $(R^3 f_*\mathbb C)^\vee$. In particular we have the following corollary of Proposition \ref{th:solutions-Picard--Fuchs}.
\begin{corollary} \label{cor:unipotency-of-0}
    The local system $\ker R$ on the punctured disk $0 < |\psi| < 1$ has unipotent local monodromy. Hence, the local monodromy of the local system $R^3 f_* \C$ near $0$ is unipotent as well. For local monodromy operator $T$ we, moreover, have $(T-I)^2 = 0$.
\end{corollary}
\begin{proof}
By Proposition \ref{th:solutions-Picard--Fuchs}, every local solution of the equation
$$R[\phi]=0$$
is a linear combination of
$$
a_0,\quad
b_0+a_0\log\psi,\quad
a_2,\quad
b_2+a_2\log\psi,
$$
where $a_0,a_2,b_0,b_2$ are holomorphic near $\psi=0$. Under analytic continuation around $\psi=0$, one has
$$
\log\psi\mapsto\log\psi+2\pi i,
$$
while the holomorphic functions \(a_0,a_2,b_0,b_2\) remain unchanged. Hence
$$
(T-I)a_0=(T-I)a_2=0,
$$
and
$$
(T-I)(b_0+a_0\log\psi)=2\pi i\,a_0,\qquad
(T-I)(b_2+a_2\log\psi)=2\pi i\,a_2.
$$
Applying \(T-I\) once more gives
$$
(T-I)^2=0.
$$
In particular, the monodromy operator on \(\ker R\) is unipotent. Since
$$
\ker R \cong (R^3f_*\mathbb C)^\vee,
$$
the dual monodromy representation is unipotent, and therefore so is the monodromy representation on $R^3f_*\mathbb C$.
\end{proof}

The vector bundle $\mathcal E = R^3 f_* \C \otimes_{\C} \O_U$ has a Hodge filtration by vector subbundles:
$$\F^3 \mathcal E \subset \F^2 \mathcal E \subset \F^1 \mathcal E \subset \F^0 \mathcal E = \mathcal E,$$ 
which comes from the Hodge filtration of cohomology of the fibers. Let $D = \{|\psi| < 1\} \subset \P^1$ be the analytic disk centered at $0$. Since the local monodromy is unipotent, the Deligne extension of $\mathcal F^3 \mathcal E$ is $f_\ast K_{\Y/D}$ by \cite[Theorem 2.6]{Kollar2}. The fiber at $0$ of this sheaf is nothing but $H^0(\Y_0, K_{\Y_0})$. This is likewise the definition of $F^3 H^3_{\lim,0}$ in \cite{Ste}. This description is useful for the following statement:

\begin{lemma} \label{lemma:omega0-at-zero-is-not-zero}
    The class $(3\psi)^{-2} \omega$ extends as a holomorphic section over the Deligne extension of $\mathcal E$, and $((3\psi)^{-2} \omega) (0) \neq 0$ as an element of $H^3_{\lim,0}$. 
\end{lemma}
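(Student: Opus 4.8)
The plan is to reinterpret $(3\psi)^{-2}\omega$ as a relative Poincar\'e residue that is visibly holomorphic at the central fibre, and then to evaluate its restriction to $\Y_0$ on the open torus. Since $(3\psi)^2$ is a function of $\psi$ alone it commutes with the residue maps \eqref{res1def}--\eqref{res2def}, so
$$(3\psi)^{-2}\omega(\psi)=\frac{1}{(2\pi i)^5}\operatorname{res}_{Y_{2,\psi}}\operatorname{res}_{Y_{1,\psi}}\frac{\Omega}{h_1 h_2};$$
morally, the factor $(3\psi)^2$ in $\omega^{\circ}$ records that $\omega$ vanishes to order $2$ at $\psi=0$, and the right-hand side is the expected nonvanishing extension.

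First I would prove the extension claim. Consider the top form $\tfrac{\Omega\wedge d\psi}{h_1 h_2}$ on $\P_{\Pi}\times\P^1$. In a neighbourhood of $\P_{\Pi}\times\{0\}$ its polar locus is exactly $Y_1\cup Y_2$ with a simple pole along each: there is no pole along the toric boundary of $\P_{\Pi}$ (this is the defining feature of the toric Euler form $\Omega$ together with $\deg h_1+\deg h_2=[D_1]+[D_2]=-K_{\P_{\Pi}}$, cf. the discussion of $\omega^{\circ}$ after \eqref{etacirc}) and no pole along $\P_{\Pi}\times\{0\}$, since $h_1|_{\psi=0}=v_{123}\th_1$ and $h_2|_{\psi=0}=u_{456}\th_2$ are not identically zero. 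By Theorem \ref{th:geometry-summary}, $\Y$ is smooth near $\Y_0$ and is cut out there by the transverse regular sequence $h_1,h_2$, so $h_1,h_2$ form part of a local coordinate system along $\Y_0$ and the iterated Poincar\'e residue of $\tfrac{\Omega\wedge d\psi}{h_1 h_2}$ along $Y_1$ and then $Y_2$ is a holomorphic top form on $\Y$ near $\Y_0$. Passing to the associated section of the relative canonical sheaf, as in the isomorphism \eqref{eq:adjunction} of Lemma \ref{lemma:omegaconstruction} (but now near $\psi=0$ instead of near $\infty$), yields a holomorphic section $\beta'$ of $K_{\Y/\P^1}$ over a neighbourhood of $\Y_0$ -- i.e. a holomorphic section of $f_\ast K_{\Y/D}$ over the disk $D$ -- whose restriction to $\Y_\psi$ is a fixed nonzero multiple of $(3\psi)^{-2}\omega(\psi)$. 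Since the local monodromy near $0$ is unipotent (the preceding lemma), $f_\ast K_{\Y/D}$ is the Deligne extension of $\F^3\mathcal E$ by Kollár's theorem, as recorded just before the lemma; hence $(3\psi)^{-2}\omega$ extends holomorphically over the Deligne extension of $\mathcal E$.

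Next I would show the value at $0$ is nonzero. By base change for $f_\ast K_{\Y/D}$ (recorded before the lemma), this value is the restriction $\beta'|_{\Y_0}\in H^0(\Y_0,K_{\Y_0})=F^3 H^3_{\lim,0}$. Because $W_1=V(v_{123},\th_2)$ and $W_2=V(\th_1,u_{456})$ lie in the toric boundary of $\P_{\Pi}$, the open set $\Y_0\cap\T_N=W_0\cap\T_N$ is dense in the irreducible component $W_0$, so it suffices to check that $\beta'|_{\Y_0}$ restricts to a nonzero $3$-form there. On $\T_N$, by \eqref{eq:etacirc-def},
$$(3\psi)^{-2}\omega^{\circ}|_{\T_N}=\frac{1}{(3\psi-t_1-t_2-t_3)(3\psi-t_4-t_5-t_6)}\,\frac{dt_1}{t_1}\wedge\cdots\wedge\frac{dt_5}{t_5},$$
so at $\psi=0$ the residue in question is the iterated residue of $\tfrac{1}{(t_1+t_2+t_3)(t_4+t_5+t_6)}\tfrac{dt_1}{t_1}\wedge\cdots\wedge\tfrac{dt_5}{t_5}$ along $\{t_1+t_2+t_3=0\}$ and $\{t_4+t_5+t_6=0\}$. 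Using $t_1+t_2+t_3$ and $t_4+t_5+t_6$ as local coordinates in place of $t_1$ and $t_4$, this equals, up to sign, a nonzero rational multiple of $\tfrac{dt_2}{t_2}\wedge\tfrac{dt_3}{t_3}\wedge\tfrac{dt_5}{t_5}$ on $W_0\cap\T_N=V(t_1+t_2+t_3,\,t_4+t_5+t_6)\subset\T_N$ (where $t_1=-t_2-t_3$ and $t_6=(t_1\cdots t_5)^{-1}$), hence nonzero. Therefore $((3\psi)^{-2}\omega)(0)\neq 0$ in $H^0(\Y_0,K_{\Y_0})\subseteq H^3_{\lim,0}$.

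The step I expect to be the main obstacle is the relative residue argument at the reducible central fibre: verifying that $\tfrac{\Omega\wedge d\psi}{h_1 h_2}$ acquires no spurious pole along $\P_{\Pi}\times\{0\}$, that its iterated residue along $\Y$ is honestly holomorphic and commutes with restriction to $\Y_0$, and that $f_\ast K_{\Y/D}$ has fibre $H^0(\Y_0,K_{\Y_0})$ at $0$ -- all of which lean on the smoothness of $\Y$ near $\Y_0$ and the reduced simple normal crossing structure of $\Y_0$ from Theorem \ref{th:geometry-summary}, and on the Kollár and base-change inputs quoted above. As a consistency check, $(3\psi)^{-2}\omega$ is annihilated by $\psi^6(Q+2)^4-Q^2(Q-2)^2$, whose indicial roots at $\psi=0$ are $0,0,2,2$; in the cyclic frame $(3\psi)^{-2}\omega,\ \nabla_Q((3\psi)^{-2}\omega),\dots$ (with $\nabla_Q=\psi\nabla_{d/d\psi}$, cf. \eqref{eq:picard-fuchs-psi}) the Gauss--Manin connection has a logarithmic pole at $0$ and $(3\psi)^{-2}\omega$ is the cyclic generator, which is compatible with its being a nonvanishing section of a logarithmic lattice sitting inside the Deligne extension.
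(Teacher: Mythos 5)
Your proposal is correct and follows essentially the same route as the paper: you prove the extension by interpreting $(3\psi)^{-2}\omega$ as a Poincar\'e residue that is visibly holomorphic near $\psi=0$ (reusing the adjunction isomorphism from Lemma~\ref{lemma:omegaconstruction}) combined with Koll\'ar's identification of $f_*K_{\Y/D}$ with the Deligne extension of $\F^3\mathcal E$, and you prove nonvanishing at $0$ by the same explicit toric residue computation with the form $\frac{1}{(t_1+t_2+t_3)(t_4+t_5+t_6)}\frac{dt_1}{t_1}\wedge\cdots\wedge\frac{dt_5}{t_5}$. The only cosmetic difference is that you spell out that $W_1$ and $W_2$ lie in the toric boundary, so $\Y_0\cap\T_N=W_0\cap\T_N$ and it suffices to check nonvanishing there, a point the paper leaves implicit.
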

\begin{proof}
    Recall that the factor $(3\psi)^2$ was introduced merely to ensure that $\omega$ behaves well near $\infty$. It then follows from the construction and the discussion surrounding Lemma \ref{lemma:omegaconstruction} that $(3\psi)^{-2} \omega$ defines a section of $f_\ast K_{\Y/D},$ which proves the first part of the lemma. 
    Now, from the discussion preceding the lemma, it suffices to show that the class is nonzero in $H^0(\Y_0, K_{\Y_0})$. For this, it is enough to verify that its restriction to $K_{\Y_0^{sm}}$ is nonzero, where $\Y_0^{sm} \subset \Y_0$ is the nonsingular locus. 
    
    Over this locus, the sheaf $K_{\Y_0^{sm}}$ can be described in terms of holomorphic top forms via the adjunction formula realized through Poincar\'e residues. Since this locus is contained in the open torus, we can compute $(3 \psi)^{-2} \omega |_{\psi = 0}$ in toric coordinates using the formula \eqref{eq:etacirc-def}. For simplicity of notation, we write $Y_1, Y_2$ instead of $Y_{1, 0}, Y_{2, 0}$ and we assume that $\psi = 0$ throughout the computation except for the initial multiplication by $(3 \psi)^{-2}$. We have:
    $$(3 \psi)^{-2} \omega^{\circ}|_{\T_N - (Y_{1} \cup Y_{2})} = \frac{1}{(2\pi i)^3} \frac{1}{(t_1 + t_2 + t_3) (t_4 + t_5 + t_6)} \frac{dt_1}{t_1} \wedge ... \wedge \frac{dt_5}{t_5}.$$
    Its iterated Poincar\'e residue represents the restriction of
    $(3\psi)^{-2}\omega|_{\psi=0}$ to $\Y_0^{sm}\cap\mathbb T_N.$ Let $p \in \Y_0^{\mathrm{sm}}\cap\mathbb T_N$. Since $p$ is a smooth point of the complete intersection there exist $i<j$ such that

    $$\operatorname{Jac}(\th_1,\th_2;t_i,t_j)(p) \neq 0.$$
    Here $\th_1,\th_2$ are defined by the equation \eqref{def:th}. On the corresponding local chart, the residue is, up to sign,
    $$ \frac{1}{(2\pi i)^3} \frac{
    \bigwedge_{k\neq i,j}dt_k
    }{
    t_1t_2t_3t_4t_5\,
    \operatorname{Jac}(\th_1,\th_2;t_i,t_j)
    }.$$
    
    All $t_k$ are nonzero on the torus, and the chosen Jacobian minor is nonzero near $p$. Hence this is a nonzero holomorphic $3$-form near p. Therefore the specialization of $(3\psi)^{-2}\omega$ is nonzero in
    $H^0(\Y_0,K_{\Y_0}),$
    and consequently its value at 0 is nonzero in $H^3_{\lim,0}$.
\end{proof}

It will also be useful to state an analogous result for $\psi = \infty$, which will be needed in subsequent developments.

\begin{lemma}
    $\omega(\infty) \neq 0$ as an element of $H^3_{\lim,\infty}$.
\end{lemma}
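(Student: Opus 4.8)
The plan is to mimic the proof of Lemma \ref{lemma:omega0-at-zero-is-not-zero}, but now working near $\psi = \infty$, where the factor $(3\psi)^2$ in the definition \eqref{etacirc} of $\omega^\circ$ was specifically designed to make the section behave well. First I would recall that by Proposition \ref{th:infinity-is-mum}, the monodromy near $\infty$ is unipotent, so the Deligne extension of $\mathcal F^3 \mathcal E$ is $f_\ast K_{\Y/D_\infty}$ by \cite[Theorem 2.6]{Kollar2}, and its fiber at $\infty$ computes $F^3 H^3_{\lim,\infty}$ in the sense of \cite{Ste}; this is $H^0(\Y_\infty, K_{\Y_\infty})$ when $\Y_\infty$ is a reduced divisor with the appropriate properties, and in any case it suffices to check nonvanishing on the smooth locus. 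So the statement reduces to showing that $\omega(\infty)$ restricts to a nonzero holomorphic top form on the nonsingular locus $\Y_\infty^{sm}$.

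Next I would pass to the coordinate $z = (3\psi)^{-6}$, so that $\infty$ corresponds to $z = 0$, and examine the behaviour of $\omega^\circ(\psi)$ there. From the toric expression \eqref{eq:etacirc-def},
$$\omega^\circ|_{\T_N - (Y_{1,\psi}\cup Y_{2,\psi})} = \frac{(3\psi)^2}{(3\psi - t_4-t_5-t_6)(3\psi - t_1-t_2-t_3)}\frac{dt_1}{t_1}\wedge\cdots\wedge\frac{dt_5}{t_5},$$
and dividing numerator and denominator each by $3\psi$, this equals
$$\frac{1}{(1 - (t_4+t_5+t_6)/(3\psi))(1 - (t_1+t_2+t_3)/(3\psi))}\frac{dt_1}{t_1}\wedge\cdots\wedge\frac{dt_5}{t_5},$$
which at $\psi = \infty$ (equivalently $z = 0$) specializes to the holomorphic toric form $\frac{dt_1}{t_1}\wedge\cdots\wedge\frac{dt_5}{t_5}$. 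The residue construction \eqref{eq:omega-def} along $Y_{1,\infty}$ and $Y_{2,\infty}$ then produces a top form on the smooth locus of $\Y_\infty$, and I would verify it is nonzero by the same Jacobian-of-the-defining-equations argument as in Lemma \ref{lemma:omega0-at-zero-is-not-zero}: on $\Y_\infty \cap \T_N$ the relevant Poincar\'e residue is $\frac{dt_i \wedge dt_j}{\operatorname{Jac}(h_1|_{\T_N}, h_2|_{\T_N}; t_i, t_j)}$ times a nowhere-zero factor, and for a suitable choice of indices $i,j$ depending on the point the Jacobian is nonzero — using that the singular locus inside the torus, by Proposition \ref{th:inside-torus-singularity}, occurs only at $t_1=\cdots=t_6 = \psi$, which does not lie on $\Y_\infty$. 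One must also make sure the nonsingular locus of $\Y_\infty$ actually meets the torus, so that this toric computation sees a nonzero form; this follows since the fiber over $\infty$ intersects $\T_N$ (its equations there are $t_1+t_2+t_3 = t_4+t_5+t_6 = 0$ after rescaling by $3\psi$ and letting $\psi\to\infty$, which has solutions) and the smooth locus is dense.

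The main obstacle I anticipate is the bookkeeping around the fiber $\Y_\infty$ itself: unlike $\Y_0$, whose geometry was worked out in detail in Section \ref{sec:geometry}, the fiber over $\infty$ has not been analyzed, so I cannot quote a clean structural description. To sidestep this, I would avoid asserting anything about $K_{\Y_\infty}$ globally and instead argue purely on the smooth locus: the Deligne-extended section $\omega(\infty)$ lies in $F^3 H^3_{\lim,\infty}$, which by the nilpotent orbit picture injects into $H^0(\Y_\infty^{sm}, \Omega^3)$ (or into cohomology of the normalization), and the explicit toric computation above shows the restriction is a nonzero holomorphic $3$-form. Care is needed that the factor $(3\psi)^2$ is exactly the right power to cancel the two simple poles of $1/(h_1 h_2)$ at $\psi = \infty$ — this is precisely the "well-behavedness at $\infty$" mentioned after \eqref{etacirc} — and I would include the one-line check that after rescaling by $3\psi$ in each factor, the limit $z \to 0$ of $\omega^\circ$ is the finite, nonzero toric form displayed above, which is what makes the whole argument go through.
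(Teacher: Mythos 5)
Your proposal runs into a genuine gap right where you try to sidestep the geometry of $\Y_{\infty}$. The plan — take the residue formula \eqref{eq:etacirc-def}, specialize to the torus, and check nonvanishing by the Jacobian argument — requires $\Y_{\infty}\cap\T_N$ to be nonempty, and in fact it is empty. Look at \eqref{eq:h-inside-torus}: in the chart near $\infty$ (coordinate $\psi^{-1}$) the defining equations are $\psi^{-1}h_1|_{\T_N} = 3 - \psi^{-1}(t_1+t_2+t_3)$ and $\psi^{-1}h_2|_{\T_N} = 3 - \psi^{-1}(t_4+t_5+t_6)$, which at $\psi=\infty$ are the constant $3$, not an equation. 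Equivalently, in Cox coordinates $h_1$ and $h_2$ become $\prod u_{i_1i_2i_3} = 0$ and $\prod v_{i_1i_2i_3} = 0$ at $\psi=\infty$, so $\Y_{\infty}$ lies entirely in the toric boundary. Your assertion that "its equations there are $t_1+t_2+t_3 = t_4+t_5+t_6 = 0$ after rescaling by $3\psi$ and letting $\psi\to\infty$" is what happens at $\psi=0$ (compare the proof of Lemma~\ref{lemma:omega0-at-zero-is-not-zero}), not at $\psi=\infty$: the substitution $t_i \mapsto 3\psi s_i$ is not an automorphism of $\T_N$, since the constraint $t_1\cdots t_6 = 1$ turns into $s_1\cdots s_6 = (3\psi)^{-6}$, which forces some $s_i = 0$ as $\psi\to\infty$. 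Your correct observation that $\omega^\circ|_{\T_N}$ converges to $\frac{dt_1}{t_1}\wedge\cdots\wedge\frac{dt_5}{t_5}$ as $\psi\to\infty$ makes the same point: there is no pole on the torus at $\psi=\infty$, so there is nothing there to take a Poincar\'e residue of.

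A second, related issue is the one you flagged yourself: Theorem~\ref{th:geometry-summary} shows the total space $\Y$ is singular precisely along $\Y_{\infty}$, so the identification of the Deligne extension with $f_* K_{\Y/D_{\infty}}$ via \cite[Theorem 2.6]{Kollar2} does not apply without further work (a semistable reduction or a direct analysis of $\Y_{\infty}$ as a union of toric strata $D_{u_{abc}}\cap D_{v_{def}}$). So neither half of the argument at $\psi=0$ transports cleanly to $\psi=\infty$. A workable alternative that avoids the geometry of $\Y_{\infty}$ entirely is to use the period: by \eqref{eq:phi0}, $\Phi_0 = \int_\gamma\omega = 1 + O\left((3\psi)^{-6}\right)$, so $\Phi_0$ extends holomorphically across $\infty$ with $\Phi_0(\infty) = 1$. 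Since $\infty$ is a MUM point, the monodromy-invariant part of the dual local system is one-dimensional and the pairing with the Deligne extension is nondegenerate at $\infty$; if $\omega(\infty)$ were $0$ one would have $\Phi_0 = z^k\cdot(\text{at most log growth})$ for some $k\ge 1$, contradicting $\Phi_0(\infty)=1$. That is a cleaner route than trying to evaluate a residue on a fiber that misses the torus.
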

\begin{proof}
    The proof is analogous to that of the previous lemma and is omitted.
\end{proof}

Let $T$ be a local monodromy operator near $0$ of the local system $(R^3 f_* \C)^{\vee}$, and $N = \log T$ its logarithm. The following theorem shows that the $0$ point is a $K$-point.
\begin{theorem}\label{RhombusThoerem}
The Hodge--Deligne diamond of the limit mixed Hodge structure near the point $\psi = 0$ for the family $\Y_{\psi}$ is given by:
$$
\begin{matrix}
  &   &    & 0 &    &   &   \\
  &   & 0  &   & 0  &   &   \\
  & 1 &    & 0 &    & 1 &   \\
0 &   & 0 &    & 0  &   & 0 \\
  & 1 &    & 0 &    & 1 &   \\
  &   & 0  &   & 0  &   &   \\
  &   &    & 0 &    &   &  \\
\end{matrix}
$$
\end{theorem}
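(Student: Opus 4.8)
The plan is to read the diamond off from three ingredients, all either established earlier or entirely standard: the Jordan type of the local monodromy around $0$, the ranks of the Hodge filtration on $H^3_{\lim,0}$, and the axioms of a (polarized) limiting mixed Hodge structure. No additional geometry of $\Y_0$ is needed.

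First I would pin down the monodromy weight filtration $W_\bullet$. By the lemma and the remarks preceding Proposition~\ref{th:solutions-picard-fuchs}, the local monodromy $T$ around $0$ is unipotent and $N := \log T$ satisfies $N^2(N-2)^2 = 0$, so nilpotency of $N$ forces $N^2 = 0$. Proposition~\ref{th:solutions-picard-fuchs} exhibits a fundamental system of solutions of $R$ of the shape $a_0,\ b_0 + \log(\psi)\,a_0,\ a_2,\ b_2 + \log(\psi)\,a_2$; since, by Theorem~\ref{th:v-basis} and Corollary~\ref{th:u-basis}, $R$ is the Picard--Fuchs operator underlying the connection on $\mathcal E = \mathcal O_U \otimes R^3 f_* \C$, the monodromy of $R^3 f_* \C$ on a punctured disc at $0$ is, up to the contragredient, given in this basis by two Jordan blocks of size $2$. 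Hence $\operatorname{rank} N = 2$, and the weight filtration of $N$ centered at weight $3$ is forced to be $0 = W_1 \subsetneq W_2 = W_3 \subsetneq W_4 = H^3_{\lim,0}$, with $\dim Gr^W_2 = \dim Gr^W_4 = 2$, $Gr^W_3 = 0$, and $N$ inducing an isomorphism of pure Hodge structures $Gr^W_4 \xrightarrow{\sim} Gr^W_2(-1)$.

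Next I would bring in the Hodge filtration. By Proposition~\ref{th:hodge-numbers} the bundle $\mathcal F^3 \mathcal E$ has rank $1$, so its Deligne extension across $0$ is a line bundle and $\dim F^3 H^3_{\lim,0} = 1$; by Lemma~\ref{lemma:omega0-at-zero-is-not-zero} this line is spanned by the nonzero class $((3\psi)^{-2}\omega)(0)$. The only input used below is $F^3 H^3_{\lim,0} \neq 0$. Since $F^\bullet$ induces a pure, effective Hodge structure of weight $j$ on each $Gr^W_j$ and $F^4 H^3_{\lim,0}=0$, the Hodge numbers of $Gr^W_2$ lie in $\{(2,0),(1,1),(0,2)\}$ and those of $Gr^W_4$ in $\{(3,1),(2,2),(1,3)\}$. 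Because $W_2$ is pure of weight $2$ we have $F^3 \cap W_2 = 0$, so $F^3 H^3_{\lim,0}$ injects into $Gr^W_4 = H^3_{\lim,0}/W_2$ with image inside $Gr_F^3 Gr^W_4$; therefore $h^{3,1}(Gr^W_4) \geq 1$, and as $\dim Gr^W_4 = 2$ with conjugation-symmetric Hodge numbers this forces $h^{3,1}(Gr^W_4) = h^{1,3}(Gr^W_4) = 1$ and $h^{2,2}(Gr^W_4) = 0$. Transporting along $N\colon Gr^W_4 \xrightarrow{\sim} Gr^W_2(-1)$ gives $h^{2,0}(Gr^W_2) = h^{0,2}(Gr^W_2) = 1$ and $h^{1,1}(Gr^W_2) = 0$. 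Since $Gr^W_3$ and all $Gr^W_j$ with $j \notin \{2,4\}$ vanish, the only nonzero Hodge--Deligne numbers are $h^{3,1} = h^{1,3} = h^{2,0} = h^{0,2} = 1$, which is precisely the rhombus of Theorem~\ref{RhombusThoerem}.

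The step I expect to be decisive, though short, is the one in the previous paragraph where the single piece of data $F^3 H^3_{\lim,0} \neq 0$, combined with effectivity of $Gr^W_2$, rules out the only competing possibility for $Gr^W_4$, namely type $(2,2)^{\oplus 2}$, which would produce a Tate-type ``square'' rather than the rhombus. The point demanding the most care is instead in the first step: to be entitled to compute the monodromy from Proposition~\ref{th:solutions-picard-fuchs}, one uses that $\mathcal E$ over $U$ is the $\mathcal D$-module attached to the Picard--Fuchs equation (Theorem~\ref{th:v-basis}, Corollary~\ref{th:u-basis}) and that the canonical extension, hence the limiting mixed Hodge structure, depends only on the restriction of $\mathcal E$ to a punctured disc around $0$. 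One could alternatively argue through the Clemens--Schmid exact sequence for the semistable degeneration $\Y_0 = W_0 \cup W_1 \cup W_2$ with $W_1 \cap W_2 = \emptyset$, but this would require the cohomology of the three components and of the two double loci $W_0 \cap W_1$, $W_0 \cap W_2$, which the argument above avoids.
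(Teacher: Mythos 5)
Your proof is correct and rests on the same essential ingredients as the paper's argument: $N^2 = 0$ deduced from $N^2(N-2)^2 = 0$; the Frobenius fundamental system of Proposition~\ref{th:solutions-picard-fuchs} together with the cyclicity of $\mathcal{E}$ (Corollary~\ref{th:u-basis}) to obtain $\operatorname{rank} N = 2$; Schmid's isomorphism $N : Gr^W_4 \to Gr^W_2$; and the Hodge filtration ranks from Proposition~\ref{th:hodge-numbers} and Lemma~\ref{lemma:omega0-at-zero-is-not-zero}. The only difference is bookkeeping: the paper enumerates the three Hodge--Deligne diamonds compatible with Schmid's isomorphisms and $\dim Gr^i_F H^3_{\lim,0} = 1$ and then uses $\operatorname{rank} N = 2$ to select the rhombus, whereas you use $\operatorname{rank} N = 2$ first to fix the weight-graded dimensions and then $F^3 \neq 0$ plus effectivity to exclude the type-$(2,2)$ alternative --- a modest reorganization of the same method.
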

\begin{proof}
   From the Corollary \ref{cor:unipotency-of-0}, we have $N^2 = 0$, since fundamental system of solutions $R[f] = 0$ has no $(\log \psi)^2$-terms. It follows that $T = 1 + N$. Also we know that the operator $N^k: Gr_{3+k}^{W} H^3_{\lim,0} \to Gr^W_{3-k} H^3_{\lim,0}$ is an isomorphism \cite[Lemma 6.4]{Schmid}. Hence, the rows $0$, $1$, $5$, and $6$ of the Hodge--Deligne diamond are zero. Furthermore, according to \cite[Lemma 6.4]{Schmid}, we know that 
   $$N : Gr^W_4 H^3_{\lim,0} \to Gr^W_2 H^3_{\lim,0}$$ 
   is an isomorphism. Additionally, from explicit computation of Hodge numbers, we know that 
   $$\dim Gr^i_{F} H^3_{\lim,0} = 1$$ 
   for $i=0,1,2,3$. For combinatorial reasons, only three possibilities remain: 
 \[
\begin{matrix}
  &   &    & 0 &    &   &   \\
  &   & 0  &   & 0  &   &   \\
  & 1 &    & 0 &    & 1 &   \\
0 &   & 0 &   & 0 &   & 0 \\
  & 1 &    & 0 &    & 1 &   \\
  &   & 0  &   & 0  &   &   \\
  &   &    & 0 &    &   &
\end{matrix}
\qquad
\begin{matrix}
  &   &    & 0 &    &   &   \\
  &   & 0  &   & 0  &   &   \\
  & 0 &    & 1 &    & 0 &   \\
1 &   & 0 &   & 0 &   & 1 \\
  & 0 &    & 1 &    & 0 &   \\
  &   & 0  &   & 0  &   &   \\
  &   &    & 0 &    &   &
\end{matrix}
\qquad
\begin{matrix}
  &   &    & 0 &    &   &   \\
  &   & 0  &   & 0  &   &   \\
  & 0 &    & 0 &    & 0 &   \\
1 &   & 1 &   & 1 &   & 1 \\
  & 0 &    & 0 &    & 0 &   \\
  &   & 0  &   & 0  &   &   \\
  &   &    & 0 &    &   &
\end{matrix}
\]
    Through the period isomorphism
    $$(R^3f_*\C)^\vee \cong \ker R,$$
    choose a basis of multivalued flat sections $\gamma_0,\gamma_1,\gamma_2,\gamma_3$ whose periods are the Frobenius solutions:
    $$(\gamma_0,\widetilde\omega)=a_0,\quad
    (\gamma_1,\widetilde\omega)=b_0+a_0\log\psi,\quad
    (\gamma_2,\widetilde\omega)=a_2,\quad
    (\gamma_3,\widetilde\omega)=b_2+a_2\log\psi,$$
    here $a_0,a_2,b_0,b_2$ are the same as in Proposition \ref{th:solutions-Picard--Fuchs}. Then we have
    $$w_i := (\gamma_i,  \tilde \omega) = \left(\gamma_i - \frac{\log \psi}{2 \pi i} N \gamma_i, \tilde \omega \right) + \frac{\log \psi}{2 \pi i} (N \gamma_i, \tilde \omega).$$ 
    The functions $(\gamma_i - \frac{\log \psi}{2 \pi i} N \gamma_i, \tilde \omega)$ and $(N \gamma_i, \tilde \omega)$ are both holomorphic at $0$, since they are pairings of holomorphic objects that extend over $0$. Indeed, by Lemma \ref{lemma:omega0-at-zero-is-not-zero} $\tilde \omega$ is naturally defined at $0$; the cycle $\gamma_i - \frac{\log \psi}{2 \pi i} N \gamma_i$ is invariant under local monodromy, since $N^2 = 0$; and $N \gamma$ is also invariant under local monodromy, again because $N^2 = 0$. Thus, the solutions $w_i$, can be listed as:
    $$w_0 = a_0,\quad w_1 = b_0 +  a_0 \log \psi,\quad w_2 = a_2,\quad w_3 = b_2 + a_2 \log \psi,$$
    where all functions $a_0, b_0, a_2, b_2$ are holomorphic in a neighborhood of $\psi = 0$. Since $a_0 \neq 0, a_2 \neq 0$, it follows that $N \gamma_1 \neq 0, N \gamma_3 \neq 0$ for $i=1$ and $i=3$. Given that $w_0,...,w_3$ form a fundamental system of solutions of the equation $R[\phi] = 0$, as stated in Proposition \ref{th:solutions-Picard--Fuchs}, we conclude that there are two linearly independent solutions whose logarithmic parts are nonzero. This implies that there are two linearly independent cycles --- specifically $\gamma_1$ and $\gamma_3$, such that $N\gamma_1 \neq 0$ and $N\gamma_3 \neq 0$. Moreover $N \gamma_1, N \gamma_3$ are linearly independent since their periods are proportional to $a_0$ and $a_2$ respectively. 
    
    Since $N \gamma_1, N \gamma_3$ are linearly independent, $N$ has rank $2$; hence $N$ has two Jordan blocks of size $2$. By the preceding discussion, $\operatorname{rank} N = 2$, so only the first of the three Hodge--Deligne diamonds can occur. 
\end{proof}
    
    The following proposition shows that the only potentially nonconstant local system $R^if_* \C$ is when $i = 3$. Note that for $i=1,5$, the local system is trivial and for $i=0,6$ the local system is constant.

\begin{proposition}
    The local systems $R^2 f_* \C$ and $R^4 f_* \C$ are constant.
\end{proposition}
\begin{proof}
By \cite[Proposition 1.4]{Mavl}, the restriction map
\[
H^2(\P_{\Pi}) \longrightarrow H^2(\Y_{\psi})
\]
is surjective provided that the polynomials $h_1$ and $h_2$, defined in \eqref{eq:h1} and \eqref{eq:h2}, belong to the irrelevant ideal $B(\Pi)$. Equivalently, every monomial of $h_1$ and $h_2$ must be divisible by a noncone monomial, which is true.

The morphism above assembles into a morphism of local systems. Since $H^2(\P_{\Pi})$ is constant as a local system, its monodromy is trivial. The above surjectivity therefore implies that the local system $R^2 f_* \C$ also has trivial monodromy and is consequently constant. Finally, Poincaré duality identifies the monodromy representations on $R^2 f_* \C$ and $R^4 f_* \C$, so $R^4 f_* \C$ is constant as well.
\end{proof}

\bibliographystyle{alpha}
\bibliography{Biblio}
\end{document}